\documentclass[12pt]{article}
\usepackage{amssymb}
\usepackage{amsmath,amsthm}
\usepackage[latin1]{inputenc}
\usepackage{color}
\usepackage{graphicx}
\DeclareGraphicsRule{.JPG}{eps}{*}{`jpeg2ps #1}



 \setlength{\parindent}{0.3in}
 \newtheorem{remark}{Remark}

 \newtheorem{lemma}[remark]{Lemma}
 \newtheorem{theorem}[remark]{Theorem}
 \newtheorem{proposition}[remark]{Proposition}
 \newtheorem{corollary}[remark]{Corollary}

\addtolength{\hoffset}{-1.7cm}
\addtolength{\textwidth}{3.8cm}
\addtolength{\voffset}{-2cm} \addtolength{\textheight}{3cm}

\title{Roman domination in Cartesian product graphs and strong product graphs}

\author{ Ismael G. Yero$^{1}$ and Juan A.
Rodr\'{\i}guez-Vel\'{a}zquez$^{2}$\\
    \\
$^1${\small Departamento de Matem\'aticas, Escuela Polit\'ecnica Superior de Algeciras}\\
{\small Universidad de C\'adiz,} {\small
Av. Ram\'on Puyol, s/n, 11202 Algeciras, Spain.} \\ {\small
ismael.gonzalez\@@uca.es}\\
$^2${\small Departament d'Enginyeria Inform\`atica i Matem\`atiques}\\
{\small Universitat Rovira i Virgili,}  {\small Av. Pa\"{\i}sos
Catalans 26, 43007 Tarragona, Spain.} \\{\small
juanalberto.rodriguez\@@urv.cat}
\\
}


\begin{document}

\maketitle
\begin{abstract}
A set $S$ of vertices of a graph $G$ is a dominating set for $G$ if
every vertex outside of $S$ is adjacent to at least one vertex
belonging to $S$. The minimum cardinality of a dominating set for $G$ is called the
domination number of $G$. A map $f : V \rightarrow \{0, 1, 2\}$ is a Roman dominating function on a
graph $G$ if for every vertex $v$ with $f(v) = 0$, there exists a
vertex $u$, adjacent to $v$, such that $f(u) = 2$. The weight of a Roman
dominating function is given by $f(V) =\sum_{u\in V}f(u)$. The
minimum weight of a Roman dominating function on $G$ is called the
Roman domination number of $G$. In this article we study the Roman domination number of Cartesian  product graphs and strong product graphs. More precisely, we study the relationships between the Roman domination number of product graphs and the (Roman) domination number  of the factors.
\end{abstract}

{\it Keywords:} Domination number; Roman domination number; Cartesian product graphs; strong product graphs.

{\it AMS Subject Classification Numbers:}   05C69; 05C70; 05C76.

\section{Introduction}

Nowadays the study of the behavior of several
graph parameters in product graphs have become an
interesting topic of research \cite{BookRall,BookImrich}. For instance, we emphasize the Shannon capacity of a graph \cite{shannon}, which is a certain
limiting value involving the vertex independence number of strong
product powers of a graph, and the Hedetniemi's coloring conjecture
for the categorical product \cite{hedetniemi,BookImrich}, which states that the chromatic number
of any categorial product graph is equal to the minimum value between
the chromatic numbers of its factors. Also, one of the oldest open
problems on domination in graphs is related with Cartesian product graphs. The
problem was presented first by Vizing  in 1963 \cite{vizing1}.  Vizing's conjecture
states that the domination number of any Cartesian product graph is
greater than or equal to the product of the domination numbers of its
factors.

Vizing's conjecture has become one of the most interesting problems on domination in graphs, which has led to develop some other kind of Vizing-like results for several parameters, even not related with standard domination. Many works have been developed in this sense and the conjecture has been proved for several families of graphs. The surveys \cite{bresar4,survey-vizing} contain almost all the results obtained around the conjecture. Also, in these surveys appear some references to similar open problems on product graphs. Nevertheless, the quantity of works about the conjecture have not been enough to finally prove or disprove it.
One variant of domination is the concept of Roman domination  introduced first by Steward
in \cite{roman-1} and studied further by other authors \cite{roman,PhDThesisDreyer,Favaron,M-Henning,roman-ciclos}. In this article we obtain Vizing-like results for the Roman domination number of Cartesian product graphs and strong product graphs.

We begin by establishing the principal terminology and notation
which we will use throughout the article. Hereafter $G=(V,E)$
denotes a finite simple graph. For two adjacent vertices $u$ and $v$ of $G$ we use the notation  $u\sim v$ and, in this case, we say that $uv$ is an edge
of $G$, i.e., $uv\in E$. For a  vertex
$v$ of $G$, $N(v)=\{u\in V:\; u\sim v\}$ denotes the set of neighbors that $v$ has in $G$. $N(v)$ is called the \emph{open neighborhood of} $v$
 and the \emph{close neighborhood of} $v$ is defined as $N[v]=N(v)\cup \{v\}$.
For a set $D\subseteq V$, the \emph{open neighborhood }is $N(D)=\cup_{v\in D}N(v)$ and the \emph{closed neighborhood} is $N[D]=N(D)\cup D$. A set  $D$ is a {\em dominating set} if $N[D]=V$.
 The \emph{domination number} $\gamma(G)$  is the minimum cardinality of a dominating set in $G$.  We say that a set $S$ is a $\gamma(G)$-set if it is a dominating set and $|S|=\gamma(G)$.

A map $f : V \rightarrow \{0, 1, 2\}$ is a \emph{ Roman dominating function } for a
graph $G$ if for every vertex $v$ with $f(v) = 0$, there exists a
vertex $u\in N(v)$ such that $f(u) = 2$. The {\em weight} of a Roman
dominating function is given by $f(V) =\sum_{u\in V}f(u)$. The
minimum weight of a Roman dominating function on $G$ is called the
{\em Roman domination number} of $G$ and it is denoted by
$\gamma_R(G)$.

Any Roman dominating function $f$ on a graph $G$ induces three sets $B_0,
B_1, B_2$, where $B_i = \{v\in V\;:\; f(v) = i\}$. Thus, we will write $f=(B_0,B_1,B_2)$.  It is clear that for any
Roman dominating function $f=(B_0,B_1,B_2)$ on a graph $G=(V,E)$ of order $n$ we
have that $f(V)=\sum_{u\in V}f(u)=2|B_2|+|B_1|$ and
$|B_0|+|B_1|+|B_2|=n$.  We say that a function $f=(B_0,B_1,B_2)$ is a $\gamma_{R}(G)$-function if it is a Roman domination function and $f(V)=\gamma_R(G)$.

Several results about the Roman dominating sets have been obtained in the last years, \cite{roman,PhDThesisDreyer,Favaron,M-Henning,roman-1,roman-ciclos}, and it is natural to try to relate the Roman domination number with the standard domination number. For instance, in \cite{roman,M-Henning} was obtained the following result, which we will use as a tool in this article.
\begin{lemma}{\em \cite{roman,M-Henning}}\label{lema-roman-dom}
For any graph $G$, $\gamma(G)\le \gamma_R(G)\le 2\gamma(G)$.
\end{lemma}

In this article we study the Roman domination number of Cartesian  product graphs and strong product graphs. More precisely, we study the relationships between the Roman domination number of product graphs and the domination number (Roman domination number) of the factors.

We recall that given two graphs $G$ and $H$ with set of vertices
$V_1=\{v_1,v_2,...,v_{n_1}\}$ and $V_2=\{u_1,u_2,...,u_{n_2}\}$,
respectively, the Cartesian product of $G$ and $H$ is
the graph $G\square H=(V,E)$, where $V=V_1\times V_2$ and  two vertices
$(v_i,u_j)$ and $(v_k,u_l)$ are adjacent in $G\square H$ if and only
if
\begin{itemize}
\item $v_i=v_k$ and $u_j\sim u_l$, or

\item $v_i\sim v_k$ and $u_j=u_l$.
\end{itemize}
The strong product $G\boxtimes H$ of the graphs $G$ and $H$ is defined on the Cartesian product of the vertex sets of the factors. Two distinct vertices  $(v_i,u_j)$ and $(v_k,u_l)$ of $G\boxtimes H$ being adjacent  with respect to the strong product if and only if
\begin{itemize}
 \item $v_i=v_k$ and $u_j\sim u_l$, or

 \item $v_i\sim v_k$ and $u_j=u_l$, or

\item  $v_i\sim v_k$ and $u_j\sim u_l$.
\end{itemize}
So, the Cartesian product graph $G\square H$ is a subgraph
of the strong product graph $G\boxtimes H$.

\section{Cartesian product graphs}

Currently there are few known results on the Roman domination number of Cartesian product graphs. As far as we know, the only works on this topic are as follows.
The Roman domination number of $C_{5t}\square C_{5k}$ was studied in \cite{roman-ciclos} and the Roman domination number of some grid graphs was studied in \cite{roman,PhDThesisDreyer}. Also, the following general relationship between the Roman domination number of Cartesian product graphs and the domination number of its factors was obtained in \cite{roman-arxiv}:
\begin{equation}\label{chino}
\gamma_R(G\Box H)\ge \gamma(G)\gamma(H).
\end{equation}

The following lemma will be helpful in obtaining the results reported here.

\begin{lemma}\label{remark-Rom-dom}
Let $G$ be a graph. For any  $\gamma_{R}(G)$-function $f=(B_0,B_1,B_2)$,
\begin{itemize}
\item[{\rm (i)}]  $|B_2|\le \gamma_R(G)-\gamma(G).$

\item[{\rm (ii)}] $|B_1|\ge 2\gamma(G)-\gamma_R(G).$
\end{itemize}
\end{lemma}

\begin{proof}
Since  $B_2\cup B_1$ is a dominating set for $G$ and $B_1\cap B_2=\emptyset$, we have
$\gamma(G)\le |B_2|+|B_1|$. So, (i) is deduced as $\gamma(G)=2|B_2|+|B_1|-|B_2|=\gamma_R(G)-|B_2|$, and (ii) is obtained as
$2\gamma(G)\le 2|B_2|+2|B_1|=2|B_2|+|B_1|+|B_1|=\gamma_R(G)+|B_1|.$
\end{proof}


\begin{theorem}\label{roman-cartesiano-inferior}
For any graphs $G$ and $H$,
\begin{itemize}
\item[{\rm (i)}] $\gamma_R(G\Box H)\ge \displaystyle\frac{2\gamma(G)\gamma_R(H)}{3}.$
\item[{\rm (ii)}] $\gamma_R(G\Box H)\ge \displaystyle\frac{\gamma(G)\gamma_R(H)+\gamma(G\Box H)}{2}.$
\end{itemize}
\end{theorem}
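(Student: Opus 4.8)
The plan is to prove the stronger estimate (ii) first and then deduce (i) from it. For (i), assume (ii) and apply Lemma~\ref{lema-roman-dom} to the graph $G\Box H$, which gives $\gamma(G\Box H)\ge\frac12\gamma_R(G\Box H)$. Substituting this lower bound for $\gamma(G\Box H)$ into (ii) yields
\[
\gamma_R(G\Box H)\ \ge\ \frac{\gamma(G)\gamma_R(H)+\tfrac12\gamma_R(G\Box H)}{2},
\]
and solving this inequality for $\gamma_R(G\Box H)$ produces exactly $\gamma_R(G\Box H)\ge\frac{2\gamma(G)\gamma_R(H)}{3}$. Thus (i) is a formal consequence of (ii), and all the real work lies in (ii).

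To prove (ii), I would fix a $\gamma_R(G\Box H)$-function $f=(B_0,B_1,B_2)$, so that $\gamma_R(G\Box H)=2|B_2|+|B_1|$. Since every vertex of $B_0$ has a neighbour in $B_2$, the set $B_1\cup B_2$ is a dominating set of $G\Box H$, whence $|B_1|+|B_2|\ge\gamma(G\Box H)$. Writing
\[
2\gamma_R(G\Box H)=4|B_2|+2|B_1|=\bigl(|B_2|+|B_1|\bigr)+\bigl(3|B_2|+|B_1|\bigr),
\]
we see that (ii) follows at once from $|B_1|+|B_2|\ge\gamma(G\Box H)$ together with the single inequality $3|B_2|+|B_1|\ge\gamma(G)\gamma_R(H)$, equivalently $\gamma_R(G\Box H)+|B_2|\ge\gamma(G)\gamma_R(H)$. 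This is the heart of the argument.

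To obtain this inequality I would project $f$ onto the copies of $H$. For fixed $v_i\in V_1$ the fibre $\{v_i\}\times V_2$ induces a copy of $H$, and restricting $f$ gives $f^i(u_j)=f(v_i,u_j)$ of column weight $2|B_2^i|+|B_1^i|$, where $B_\ell^i=\{u_j:(v_i,u_j)\in B_\ell\}$. The map $f^i$ can fail to be a Roman dominating function of $H$ only at those $u_j$ with $(v_i,u_j)\in B_0$ whose label-$2$ dominator lies in an adjacent fibre (horizontal, i.e.\ $G$-directed, domination); call this defect set $A_i$. Raising the value on $A_i$ from $0$ to $1$ repairs $f^i$ into a genuine Roman dominating function of $H$, since every remaining $0$ is dominated within its own fibre, so $2|B_2^i|+|B_1^i|+|A_i|\ge\gamma_R(H)$. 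The idea is then to sum these repaired column bounds not over all fibres but over the columns indexed by a $\gamma(G)$-set of $G$ (or, more robustly, over the slabs of a Clark--Suen-type partition $V_1=\Pi_1\sqcup\cdots\sqcup\Pi_{\gamma(G)}$ coming from a minimum dominating set of $G$). Summing over these $\gamma(G)$ blocks gives $\gamma(G)\gamma_R(H)\le\gamma_R(G\Box H)+\sum|A_{(\cdot)}|$, so the claim reduces to the single estimate $\sum|A_{(\cdot)}|\le|B_2|$ on the total horizontal-domination defect.

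The main obstacle is precisely this last bound. Each defective vertex is witnessed by a vertex of $B_2$ lying in a neighbouring fibre, so one wants to charge the defects injectively to the vertices of $B_2$; the difficulty is that a single vertex of $B_2$ whose $G$-coordinate is adjacent to several blocks could a priori be charged several times, and a naive charging therefore does not yet give $\sum|A_{(\cdot)}|\le|B_2|$. Overcoming this is exactly where the dominating-set structure of the blocks must be exploited: the partition and the witness assignment have to be arranged so that each vertex of $B_2$ is used at most once, which is the Roman analogue of the counting step in the Clark--Suen inequality. Once $\sum|A_{(\cdot)}|\le|B_2|$ is secured, it combines with the two displayed reductions to finish (ii), and hence (i).
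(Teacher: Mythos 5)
Your two reductions are sound and match the paper's skeleton: deducing (i) from (ii) via Lemma \ref{lema-roman-dom} applied to $G\Box H$ is correct, and reformulating (ii) as the single estimate $\gamma_R(G\Box H)+|B_2|\ge\gamma(G)\gamma_R(H)$, using $|B_1|+|B_2|\ge\gamma(G\Box H)$, is exactly how the paper proceeds (its Lemma \ref{remark-Rom-dom} (i) is the same fact). The projection-with-defects setup is also the paper's. But the proof is not complete: everything hinges on the defect bound $\sum|A_{(\cdot)}|\le|B_2|$, and you explicitly leave it unproven (``the partition and the witness assignment have to be arranged so that each vertex of $B_2$ is used at most once''). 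This is not a detail that can be waved through: in the single-column form in which you state it, the estimate is simply \emph{false}. Take $G=P_5$ with vertices $v_1,\dots,v_5$, $H=K_1$ (so $G\Box H=G$ and each fibre is a single vertex), $S=\{v_2,v_4\}$, and the $\gamma_R(P_5)$-function $f$ with $f(v_1)=f(v_5)=1$, $f(v_3)=2$, $f(v_2)=f(v_4)=0$. Both columns indexed by $S$ are defective (their zeros are dominated only ``horizontally'', by $v_3$), so $\sum|A_{(\cdot)}|=2>1=|B_2|$: one vertex of $B_2$ really can serve several blocks, and no injective charging can exist.

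The idea you are missing is that the paper does not charge defects to $B_2$ injectively at all; it uses an \emph{exchange argument} against the minimality of $S$, and for this the slab structure (which you mention only parenthetically) is essential. Take the partition $A_i\subseteq N[u_i]$, $u_i\in A_i$, set $f_i(v)=\max\{f(u,v):u\in A_i\}$, and call row $v$ defective in block $i$ only when the \emph{entire} slab-row $A_i\times\{v\}$ lies in $B_0$ and no vertex of $A_i\times N_H(v)$ lies in $B_2$; then every $u\in A_i$ must have a horizontal dominator $(u',v)\in B_2$, i.e.\ a neighbour $u'$ in $X_v=\{x\in V_1:(x,v)\in B_2\}$. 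Consequently, for each fixed row $v$, writing $t_v$ for the number of defective blocks, the set $S_v=\bigl(S-\{u_i:\text{block $i$ defective in row $v$}\}\bigr)\cup X_v$ is still a dominating set of $G$, whence $\gamma(G)\le |S_v|\le\gamma(G)-t_v+|X_v|$, i.e.\ $t_v\le|X_v|$; summing over $v\in V_2$ gives $\sum_v t_v\le|B_2|$, which is your missing estimate. The multiplicity problem you ran into never arises, because within each row one compares cardinalities via minimality of $\gamma(G)$ rather than building a matching. Note also why your single-column variant cannot be repaired this way: when only $(u_i,v)$ itself is known to lie in $B_0$, the swapped set $S_v$ need not dominate the other vertices of $N[u_i]$ --- in the $P_5$ example above, $S_v=\{v_3\}$ fails to dominate $v_1$ and $v_5$, which is exactly where $t_v\le|X_v|$ breaks.
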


\begin{proof}
Let $V_1$ and $V_2$ be the vertex sets of $G$ and $H$, respectively.
Let $f=(B_0,B_1,B_2)$ be a $\gamma_{R}(G\Box H)$-function.
Let $S=\{u_1,u_2,...,u_{\gamma(G)}\}$ be a dominating set for $G$. Let $\{A_1,A_2,...,A_{\gamma(G)}\}$ be a vertex
partition of $G$ such that $u_i\in A_i$ and $ A_i\subseteq
N[u_i]$\footnote{Notice that this partition always exists, and it could be not unique.}.
Let $\{\Pi_1,\Pi_2,...,\Pi_{\gamma(G)}\}$ be a vertex partition of
$G\Box H$, such that $\Pi_i=A_i\times V_2$ for every
$i\in\{1,...,\gamma(G)\}$.

For every $i\in \{1,...,\gamma(G)\}$, let $f_i: V_2\rightarrow \{0,1,2\}$ be a function
such that
$f_i(v)=\max\{f(u,v)\;:\;u\in A_i\}$. For every $j\in \{0,1,2\}$, let $X_j^{(i)}=\{v\in V_2: \; f_i(v)=j\}$.  Now, let $Y_0^{(i)}\subseteq
X_0^{(i)}$ such that for every $v\in Y_0^{(i)}$, $N(v)\cap X_2^{(i)}=\emptyset$. Hence, we have that $f'_i=(X_0^{(i)}-Y_0^{(i)},X_1^{(i)}+Y_0^{(i)},X_2^{(i)})$ is  a Roman dominating function
on $H$. Thus,
\begin{align*}
\gamma_R(H)&\le 2|X_2^{(i)}|+|X_1^{(i)}|+|Y_0^{(i)}|\\
&\le 2|B_2\cap\Pi_i|+|B_1\cap\Pi_i| + |Y_0^{(i)}|.
\end{align*}
Hence,
\begin{align*}
\gamma_R(G\Box H)&=2|B_2|+|B_1|\\
&=\sum_{i=1}^{\gamma(G)}(2|B_2\cap\Pi_i|+|B_1\cap\Pi_i|)\\
&\ge\sum_{i=1}^{\gamma(G)}(\gamma_R(H)-|Y_0^{(i)}|)\\
&=\gamma(G)\gamma_R(H)-\sum_{i=1}^{\gamma(G)}|Y_0^{(i)}|.
\end{align*}
So,
\begin{equation}\label{cond-1}
\sum_{i=1}^{\gamma(G)}|Y_0^{(i)}| \ge \gamma(G)\gamma_R(H)-\gamma_R(G\Box H).
\end{equation}
Now, for every  $v\in V_2$, let $Z^v\in \{0,1\}^{\gamma(G)}$ be a binary  vector associated to $v$ as follows: $Z_i^v=1$ if $v\in Y_0^{(i)}$ and $Z_i^v=0$ if $v\not\in Y_0^{(i)}$.
So, $t_v=\| Z^v\|^2$ counts the number of components of $Z^v$ equal to one.  Hence,
\begin{equation}\label{sumasiguales}
\displaystyle\sum_{v\in
V_2}t_v=\displaystyle\sum_{i=1}^{\gamma(G)}|Y_0^{(i)}|.
\end{equation}

Notice that, if $Z^v_i=1$
and  $u\in A_{i}$, then vertex $(u,v)$ belongs to $B_0$. Moreover, $(u,v)$ is not adjacent to vertices of $B_2\cap \Pi_{i}$.
So, since $B_0$ is dominated by $B_2$, there exists $u'\in X_v=\{x\in V_1:\; (x,v)\in
B_2\}$ which is adjacent to $u$.  Hence,
$S_v=\left(S-\{u_i\in S:\; Z_i^v=1 \}\right)\cup X_v$
 is a dominating set for $G$.

Now, if $t_v >
|X_v|$, then we have
\begin{align*}
|S_v|&=|S|-t_v+|X_v|\\
&=\gamma(G)-t_v+|X_v|\\
&<\gamma(G)-t_v+t_v\\
&=\gamma(G),
\end{align*}
which is a contradiction. So, we have $t_v \le |X_v|$
and we obtain
\begin{equation}\label{cond-2}
\sum_{v\in V_2}t_v\le \sum_{v\in V_2}|X_v|=|B_2|,
\end{equation}
which leads to,
\begin{equation}
2\sum_{v\in V_2}t_v\le 2|B_2|+|B_1|=\gamma_R(G\Box H). \label{laotra}
\end{equation}
Thus, by  (\ref{cond-1}), (\ref{sumasiguales}) and (\ref{laotra}) we deduce
$$\gamma_R(G\Box H)\ge \gamma(G)\gamma_R(H)-\frac{\gamma_R(G\Box H)}{2},$$
and, as a consequence, (i) follows.

Now, By Lemma \ref{remark-Rom-dom} (i) and (\ref{cond-2}) we have
\begin{equation} \label{paraii}
\sum_{v\in V_2}t_v\le |B_2|\le \gamma_R(G\Box H)-\gamma(G\Box H).
\end{equation}
Thus,   by (\ref{cond-1}), (\ref{sumasiguales}) and (\ref{paraii}) we obtain (ii).
\end{proof}

Lemma \ref{lema-roman-dom} and Theorem \ref{roman-cartesiano-inferior} lead to the following result.
\begin{corollary} \label{Corollay1}
For any graphs $G$ and $H,$
\begin{itemize}
\item[{\rm (i)}] $\gamma_R(G\Box H)\ge \displaystyle\frac{\gamma_R(G)\gamma_R(H)}{3}.$
\item[{\rm (ii)}] $\gamma(G\Box H)\ge \displaystyle\frac{\gamma(G)\gamma_R(H)}{3}.$
\end{itemize}
\end{corollary}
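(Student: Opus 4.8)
The plan is to derive both inequalities directly from Theorem~\ref{roman-cartesiano-inferior}(i) by feeding in the appropriate half of the two-sided estimate in Lemma~\ref{lema-roman-dom}, applied in each case to the graph where it does the most good. The whole point is that Theorem~\ref{roman-cartesiano-inferior}(i) already gives a lower bound of the form $\gamma_R(G\Box H)\ge \tfrac{2}{3}\gamma(G)\gamma_R(H)$, so the only work is to convert the standard domination numbers appearing there into the Roman ones demanded by the corollary, and this conversion is exactly what Lemma~\ref{lema-roman-dom} supplies.

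For part (i), I would start from Theorem~\ref{roman-cartesiano-inferior}(i) and replace $\gamma(G)$ using the upper half of Lemma~\ref{lema-roman-dom} applied to the factor $G$. Since $\gamma_R(G)\le 2\gamma(G)$ rearranges to $2\gamma(G)\ge \gamma_R(G)$, and $\gamma_R(H)\ge 0$, I can bound
\begin{equation*}
\gamma_R(G\Box H)\ge \frac{2\gamma(G)\gamma_R(H)}{3}\ge \frac{\gamma_R(G)\gamma_R(H)}{3},
\end{equation*}
which is precisely the claim.

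For part (ii), the trick is to apply the same lemma not to a factor but to the product graph $G\Box H$ itself. The upper half $\gamma_R(G\Box H)\le 2\gamma(G\Box H)$ gives $\gamma(G\Box H)\ge \tfrac{1}{2}\gamma_R(G\Box H)$, and then Theorem~\ref{roman-cartesiano-inferior}(i) finishes the chain:
\begin{equation*}
\gamma(G\Box H)\ge \frac{\gamma_R(G\Box H)}{2}\ge \frac{1}{2}\cdot\frac{2\gamma(G)\gamma_R(H)}{3}=\frac{\gamma(G)\gamma_R(H)}{3}.
\end{equation*}

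There is essentially no hard step here; the ``obstacle'', if any, is purely organizational—recognizing that part (i) needs the bound $\gamma_R\le 2\gamma$ on the factor $G$, whereas part (ii) needs the very same bound but on the whole product $G\Box H$, and that in both cases the factor bound $\gamma_R(H)$ is simply carried through unchanged from the theorem. I would note in passing that one could instead try to route part (i) through Theorem~\ref{roman-cartesiano-inferior}(ii), but the constant $\tfrac{1}{3}$ matches the $\tfrac{2}{3}$ bound of part~(i) exactly after the substitution, so part~(i) of the theorem is the natural and cleanest starting point for both halves.
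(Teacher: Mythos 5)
Your proposal is correct and matches the paper's intent: the paper derives this corollary precisely by combining Lemma \ref{lema-roman-dom} with Theorem \ref{roman-cartesiano-inferior}, using $\gamma_R(G)\le 2\gamma(G)$ on the factor $G$ for part (i) and $\gamma_R(G\Box H)\le 2\gamma(G\Box H)$ on the product for part (ii), exactly as you do. Your routing of part (ii) through Theorem \ref{roman-cartesiano-inferior}(i) rather than (ii) is immaterial, since both yield the identical bound $\gamma(G\Box H)\ge \gamma(G)\gamma_R(H)/3$ after the same substitution.
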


Note that if there exists a graph that satisfies the above equalities, then Vizing's conjecture is false.

The following  inequality related to Vizing's conjecture was obtained in \cite{clark}:
\begin{equation}\label{CasiVizing}
\gamma(G\Box H)\ge \frac{\gamma(G)\gamma(H)}{2}.
\end{equation}
As the following Remark shows, if  $\gamma_R(H)>\frac{3\gamma(H)}{2}$, then  Corollary \ref{Corollay1} (ii) leads to a result which improves the above inequality.
\begin{remark}
Let $G$ and $H$ be two graphs. If  $\gamma_R(H)>\frac{3\gamma(H)}{2}$, then
$$\gamma(G\Box H)\ge \frac{\gamma(G)\gamma(H)}{2}+\frac{\gamma(G)}{3}.$$
\end{remark}
A graph $H$ is a \emph{Roman graph} if $\gamma_R(H)=2\gamma(H)$. Roman graphs were introduced in \cite{roman} where the authors presented some classes of Roman graphs and they proposed some open problems.
Theorem \ref{roman-cartesiano-inferior} (i) leads to the following result.
\begin{corollary}For any graph  $G$ and any Roman graph $H,$
\begin{itemize}
\item[{\rm (i)}] $\gamma_R(G\Box H)\ge \displaystyle\frac{4}{3}\gamma(G)\gamma(H).$
\item[{\rm (ii)}] $\gamma(G\Box H)\ge \displaystyle\frac{2}{3}\gamma(G)\gamma(H).$
\end{itemize}
\end{corollary}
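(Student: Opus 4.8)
The plan is to derive both inequalities directly from Theorem \ref{roman-cartesiano-inferior} (i) by exploiting the defining property of a Roman graph, namely $\gamma_R(H)=2\gamma(H)$. The heavy lifting has already been carried out in the theorem, so no new combinatorial argument is needed; the corollary is obtained by substitution together with one application of Lemma \ref{lema-roman-dom}.

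For part (i), I would start from the bound
\begin{equation*}
\gamma_R(G\Box H)\ge \frac{2\gamma(G)\gamma_R(H)}{3}
\end{equation*}
supplied by Theorem \ref{roman-cartesiano-inferior} (i), which holds for arbitrary graphs $G$ and $H$. Since $H$ is a Roman graph we may replace $\gamma_R(H)$ by $2\gamma(H)$, and the right-hand side becomes $\frac{2\gamma(G)\cdot 2\gamma(H)}{3}=\frac{4}{3}\gamma(G)\gamma(H)$, which is exactly the claimed inequality.

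For part (ii), the idea is to pass from the Roman domination number of the product to its ordinary domination number using the general upper bound $\gamma_R(G\Box H)\le 2\gamma(G\Box H)$ from Lemma \ref{lema-roman-dom}, applied to the graph $G\Box H$. This yields $\gamma(G\Box H)\ge \frac{1}{2}\gamma_R(G\Box H)$, and combining this with the inequality already established in part (i) gives
\begin{equation*}
\gamma(G\Box H)\ge \frac{1}{2}\gamma_R(G\Box H)\ge \frac{1}{2}\cdot\frac{4}{3}\gamma(G)\gamma(H)=\frac{2}{3}\gamma(G)\gamma(H),
\end{equation*}
as desired.

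In truth there is no serious obstacle here: the corollary is a routine consequence of results already proved. The only point deserving attention is to invoke Lemma \ref{lema-roman-dom} in the correct direction in part (ii) — that is, to use the \emph{upper} bound $\gamma_R\le 2\gamma$ so as to convert the lower bound on $\gamma_R(G\Box H)$ into a lower bound on $\gamma(G\Box H)$ — and to remember that the Roman graph hypothesis is used \emph{only} on the factor $H$, while $G$ remains an arbitrary graph throughout.
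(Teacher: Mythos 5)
Your proposal is correct and matches the paper's own derivation: the paper obtains this corollary from Theorem \ref{roman-cartesiano-inferior} (i) by substituting $\gamma_R(H)=2\gamma(H)$, and part (ii) follows exactly as you argue, by applying the upper bound of Lemma \ref{lema-roman-dom} to $G\Box H$ (which is also how the paper's Corollary \ref{Corollay1} (ii) was obtained). No gaps; your handling of the direction of Lemma \ref{lema-roman-dom} and of where the Roman hypothesis is used is precisely right.
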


Let $\mathfrak{F}$ be the class of all
graphs having a dominating set  $S=\{u_1,u_2,...,u_{{\gamma(G)}}\}$
such that $N[u_i]\cap N[u_j]=\emptyset$, for every $i,j\in
\{1,...,\gamma(G)\}$, $i\ne j$. In this case the set $S$ is called an \emph{efficient dominating set}.  Notice that  $\mathfrak{F}$ is the family of all  graphs having a perfect code\footnote{Given a graph $G=(V,E)$, a subset $S\subset V$ is a perfect code if $|N[v]\cap S|=1$, for every $v\in V$.}. Examples of graphs belonging to $\mathfrak{F}$ are the path graphs $P_n$, the cycle  graphs $C_{3k}$ and the cube graph $Q_3=K_2\Box K_2\Box K_2$.
Examples of Roman graphs belonging to  $\mathfrak{F}$ are $C_{3k}$, $P_{3k}$, $P_{3k+2}$ and $Q_3$. Note that $P_{3k+1}\in \mathfrak{F}$ but  $P_{3k+1}$ are not Roman paths, while $C_{3k+2}$ are Roman cycles but $C_{3k+2}\not\in \mathfrak{F}$.

 A $2$-packing of a graph $G$  is a set of vertices
in $G$ that are pair-wise at distance more than two. The
$2$-packing number $P_2(G)$ of a graph $G$ is the size of a largest $2$-packing in $G$.
The $2$-packing number is a graph invariant closely related to the domination number. In fact, it is well known that $P_2(G)\le \gamma(G)$, cf. \cite{BookRall,BookImrich}.

Let $G\in \mathfrak{F}$. Since every efficient dominating set  $S=\{u_1,u_2,...,u_{{\gamma(G)}}\}$ is a $2$-packing, we have $\gamma(G)\le P_2(G)$. So, we conclude that if $G\in \mathfrak{F}$, then $P_2(G) = \gamma(G)$ (The converse is not true). We recall that if $P_2(G) = \gamma(G)$, then Vizing's conjecture holds for $G$ \cite{BookImrich}. As a consequence, by Theorem \ref{roman-cartesiano-inferior} (ii) we deduce the following result which improves the inequality (\ref{chino}) when $G\in \mathfrak{F}$.
\begin{corollary}
Let $G$ and $H$ be two graphs.
 If $G\in \mathfrak{F}$,  then
$$\gamma_R(G\Box H)\ge \frac{1}{2}\max\left\{\gamma(G)\left(\gamma_R(H)+\gamma(H)\right), \gamma(H)\left(\gamma_R(G)+\gamma(G)\right)\right\}.$$
\end{corollary}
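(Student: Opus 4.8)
The plan is to combine Theorem~\ref{roman-cartesiano-inferior}(ii), which holds for arbitrary graphs with no side hypothesis, with the single consequence of the assumption $G\in\mathfrak{F}$ that we actually need: as explained in the discussion preceding the statement, $G\in\mathfrak{F}$ forces $P_2(G)=\gamma(G)$, and $P_2(G)=\gamma(G)$ guarantees that Vizing's conjecture holds for every product having $G$ as a factor, that is, $\gamma(G\Box H)\ge \gamma(G)\gamma(H)$ for all $H$. This domination-type inequality is the only point at which the hypothesis $G\in\mathfrak{F}$ enters the argument.

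For the first entry of the maximum I would apply Theorem~\ref{roman-cartesiano-inferior}(ii) directly to $G\Box H$, obtaining
$$\gamma_R(G\Box H)\ge \frac{\gamma(G)\gamma_R(H)+\gamma(G\Box H)}{2},$$
and then substitute the Vizing bound $\gamma(G\Box H)\ge \gamma(G)\gamma(H)$ into the numerator, which yields
$$\gamma_R(G\Box H)\ge \frac{\gamma(G)\bigl(\gamma_R(H)+\gamma(H)\bigr)}{2}.$$

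For the second entry I would exploit the commutativity of the Cartesian product, $G\Box H\cong H\Box G$, so that both $\gamma_R(G\Box H)=\gamma_R(H\Box G)$ and $\gamma(G\Box H)=\gamma(H\Box G)$. Applying Theorem~\ref{roman-cartesiano-inferior}(ii) to $H\Box G$, with $H$ now in the role of the first factor, gives
$$\gamma_R(H\Box G)\ge \frac{\gamma(H)\gamma_R(G)+\gamma(H\Box G)}{2},$$
and since the Vizing bound $\gamma(H\Box G)=\gamma(G\Box H)\ge \gamma(G)\gamma(H)$ still holds (it requires only that one factor, here $G$, satisfies $P_2(G)=\gamma(G)$), I obtain
$$\gamma_R(G\Box H)\ge \frac{\gamma(H)\bigl(\gamma_R(G)+\gamma(G)\bigr)}{2}.$$
Taking the larger of these two lower bounds completes the argument.

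The only genuinely delicate point is that Theorem~\ref{roman-cartesiano-inferior}(ii) is \emph{not} symmetric in its two factors, so one cannot merely relabel $G$ and $H$ in the statement to produce the second bound. The crux is to observe that this asymmetry is harmless here: the Roman and the ordinary domination numbers of a Cartesian product are invariant under swapping the factors, while the hypothesis $G\in\mathfrak{F}$ supplies Vizing's conjecture for \emph{both} orderings of the product through the single distinguished factor $G$. Once this observation is recorded, the remainder is routine substitution.
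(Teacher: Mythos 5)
Your proposal is correct and matches the paper's intended argument exactly: the paper derives this corollary by combining Theorem~\ref{roman-cartesiano-inferior}~(ii) with the observation that $G\in\mathfrak{F}$ implies $P_2(G)=\gamma(G)$, hence Vizing's conjecture holds for $G$, i.e.\ $\gamma(G\Box H)\ge\gamma(G)\gamma(H)$. Your handling of the second entry of the maximum via commutativity of the Cartesian product (noting that the Vizing bound needs only the single factor $G\in\mathfrak{F}$, in either ordering) is precisely the detail the paper leaves implicit, and it is handled correctly.
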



\begin{theorem}\label{roman-cartesiano-inferior1}
Let $G$ and $H$ be two graphs.
 If $G\in \mathfrak{F}$, then $$\gamma_R(G\Box H)\ge \gamma(G)\gamma_R(H).$$
\end{theorem}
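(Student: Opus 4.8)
The plan is to re-run the construction from the proof of Theorem \ref{roman-cartesiano-inferior}, but to exploit the perfect-code structure of $G$ to show that the correction terms $Y_0^{(i)}$ all vanish. Concretely, since $G\in\mathfrak{F}$, I would fix an efficient dominating set $S=\{u_1,\dots,u_{\gamma(G)}\}$, so that the closed neighborhoods $N[u_1],\dots,N[u_{\gamma(G)}]$ are pairwise disjoint and, because $S$ dominates $G$, actually partition $V_1$. This lets me take the partition used in that proof to be exactly $A_i=N[u_i]$, which is legitimate since $u_i\in A_i$ and $A_i\subseteq N[u_i]$. With this choice I inherit verbatim the inequality (\ref{cond-1}), namely $\sum_{i=1}^{\gamma(G)}|Y_0^{(i)}|\ge \gamma(G)\gamma_R(H)-\gamma_R(G\Box H)$, whose derivation does not depend on any special structure of $G$. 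It therefore suffices to prove that each $Y_0^{(i)}$ is empty.

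The heart of the argument is this claim $Y_0^{(i)}=\emptyset$ for every $i$. Suppose instead that some $v\in Y_0^{(i)}$ exists, and examine the vertex $(u_i,v)$ of $G\Box H$. By definition of $Y_0^{(i)}\subseteq X_0^{(i)}$ we have $f_i(v)=\max_{u\in A_i}f(u,v)=0$, so in particular $f(u_i,v)=0$ and $(u_i,v)\in B_0$; hence $(u_i,v)$ must have a neighbor in $B_2$. Its neighbors in the Cartesian product are of two kinds. The \emph{vertical} neighbors $(u_i,v')$ with $v'\sim v$ satisfy $f(u_i,v')\neq 2$, because $v\in Y_0^{(i)}$ forces $N(v)\cap X_2^{(i)}=\emptyset$, i.e.\ no $v'\sim v$ has $\max_{u\in A_i}f(u,v')=2$. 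The \emph{horizontal} neighbors $(u',v)$ with $u'\sim u_i$ are where the hypothesis $G\in\mathfrak{F}$ enters decisively: every such $u'$ lies in $N(u_i)\subseteq N[u_i]=A_i$, and since $f_i(v)=0$ this gives $f(u',v)=0$. Thus $(u_i,v)$ has no neighbor of weight $2$, contradicting that $f$ is a Roman dominating function. This contradiction proves the claim.

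Finally, with $\sum_{i=1}^{\gamma(G)}|Y_0^{(i)}|=0$, inequality (\ref{cond-1}) immediately yields $\gamma_R(G\Box H)\ge \gamma(G)\gamma_R(H)$, as desired. I expect the only delicate point to be the selection of the partition $A_i=N[u_i]$: it is precisely the disjointness of the closed neighborhoods that seals off the horizontal escape route for $(u_i,v)$ and thereby eliminates the terms $Y_0^{(i)}$ which, in the general estimate of Theorem \ref{roman-cartesiano-inferior}, are the sole obstruction to the clean Vizing-type bound $\gamma(G)\gamma_R(H)$.
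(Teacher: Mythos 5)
Your proposal is correct and follows essentially the same route as the paper's proof: both use the partition of $V_1$ into the closed neighborhoods $N[u_i]$ of an efficient dominating set, and both rest on the same key observation that a vertex $(u_i,v)$ with $f_i(v)=0$ has all of its horizontal neighbors inside $N[u_i]\times\{v\}$ carrying value $0$, so it must be dominated vertically --- which is exactly the statement that each projected function is a Roman dominating function on $H$, i.e.\ that your sets $Y_0^{(i)}$ are empty. The only cosmetic difference is in the final accounting: the paper sums $2|X_2^{(i)}|+|X_1^{(i)}|\ge\gamma_R(H)$ over the blocks directly, whereas you feed the vanishing correction terms back into inequality (\ref{cond-1}) from the proof of Theorem \ref{roman-cartesiano-inferior}.
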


\begin{proof}
Let $V_1$ and $V_2$ be the vertex sets of $G$ and $H$, respectively.
Let $S=\{u_1,u_2,...,u_{\gamma(G)}\}$ be an efficient dominating set for $G$, i.e.,
   $\{N[u_1],N[u_2],...,N[u_{\gamma(G)}]\}$ is a vertex
partition of $G$ and, as a consequence, 
$\{\Pi'_1,\Pi'_2,...,\Pi'_{\gamma(G)}\}$ is a vertex partition of
$G\Box H$, where $\Pi'_i=N[u_i]\times V_2$ for every
$i\in\{1,...,\gamma(G)\}$.

Proceeding analogously to the proof of Theorem \ref{roman-cartesiano-inferior}, we
consider a $\gamma_{R}(G\Box H)$-function  $f=(B_0,B_1,B_2)$ and, for every $i\in\{1,...,\gamma(G)\}$,
 we define the function $f_i:V_2\rightarrow \{0,1,2\}$ as
$f_i(v)=\max\{f(u,v)\;:\;u\in N[u_i]\}$. In addition, for every $j\in \{0,1,2\}$   we define $X_j^{(i)}=\{v\in V_2:f_i(v)=j\}$.

Now,  if  $v\in
X_0^{(i)}$, then for every $u\in N[u_i]$ we have that $(u,v)\in B_0$. Hence, since $u_i$ has no neighbors in $V_1-N[u_i]$ and $B_2$ dominates $B_0$, there exists $(u_i,v')\in B_2$ such that $v'$ is adjacent to $v$. We conclude that every $v\in
X_0^{(i)}$ has a neighbor  $v'\in X_2^{(i)}$ and, as a consequence,
$f_i=(X_0^{(i)},X_1^{(i)},X_2^{(i)})$ is a Roman dominating function on $H$, for every $i\in\{1,...,\gamma(G)\}$. Therefore,
the result is deduced as follows:
\begin{align*}
\gamma_R(G\Box H)&=2|B_2|+|B_1|\\
&=\sum_{i=1}^{\gamma(G)}\left(2|B_2\cap\Pi'_i|+|B_1\cap\Pi'_i|\right)\\
&\ge \sum_{i=1}^{\gamma(G)}\left( 2|X_2^{(i)}|+|X_1^{(i)}|\right)\\
&\ge \gamma(G)\gamma_R(H).
\end{align*}
\end{proof}

An interesting consequence of Theorem \ref{roman-cartesiano-inferior1} is the following result.
\begin{corollary}\label{F+Roman}
Let $G$ and $H$ be two graphs. If $G\in \mathfrak{F}$ and $H$ is a Roman graph, then $$\gamma_R(G\Box H)\ge 2\gamma(G)\gamma(H).$$
\end{corollary}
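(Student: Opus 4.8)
The plan is to obtain this directly as a substitution into Theorem \ref{roman-cartesiano-inferior1}, since both hypotheses of that theorem and the Roman-graph definition line up perfectly. First I would observe that the single structural hypothesis required to invoke Theorem \ref{roman-cartesiano-inferior1} is precisely $G\in\mathfrak{F}$, which is assumed here. Applying that theorem verbatim therefore yields the inequality
\begin{equation*}
\gamma_R(G\Box H)\ge \gamma(G)\gamma_R(H).
\end{equation*}

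The remaining ingredient is the defining property of a Roman graph. By the definition introduced just before Theorem \ref{roman-cartesiano-inferior1}, a graph $H$ is a Roman graph exactly when $\gamma_R(H)=2\gamma(H)$. Since $H$ is assumed to be Roman, I would substitute this equality into the bound above to obtain
\begin{equation*}
\gamma_R(G\Box H)\ge \gamma(G)\gamma_R(H)=\gamma(G)\bigl(2\gamma(H)\bigr)=2\gamma(G)\gamma(H),
\end{equation*}
which is the claimed inequality.

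There is essentially no obstacle in this argument: it is a pure combination of the previously established general lower bound with the numerical identity characterizing Roman graphs. The only point requiring any care is to confirm that the two hypotheses are used in the correct roles, namely that $G$ (the factor assumed to lie in $\mathfrak{F}$) is the factor whose domination number appears as the coefficient $\gamma(G)$ in Theorem \ref{roman-cartesiano-inferior1}, while $H$ (the Roman factor) is the one contributing $\gamma_R(H)$; since these match exactly, no further work is needed.
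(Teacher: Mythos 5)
Your proposal is correct and is exactly the paper's argument: the paper presents this corollary as an immediate consequence of Theorem \ref{roman-cartesiano-inferior1}, applying $\gamma_R(G\Box H)\ge\gamma(G)\gamma_R(H)$ for $G\in\mathfrak{F}$ and then substituting the Roman-graph identity $\gamma_R(H)=2\gamma(H)$. Your attention to which factor plays which role is also the right check, and it matches the paper.
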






\begin{theorem}\label{superior}
For any graphs $G$ and $H$ of order $n_1$ and $n_2$, respectively,
 $$\gamma_R(G\Box H)\le \min\{n_1\gamma_R(H),n_2\gamma_R(G)\}.$$
\end{theorem}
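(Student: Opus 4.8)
The plan is to build an explicit Roman dominating function on $G\Box H$ by lifting a $\gamma_R(H)$-function identically to every $H$-fiber, and then to invoke the symmetry $G\Box H\cong H\Box G$ to obtain the second term of the minimum. The construction is the obvious one, and the only real content is checking that it remains a valid Roman dominating function in the product.

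First I would fix a $\gamma_R(H)$-function $g=(B_0,B_1,B_2)$ on $H$ and define $f\colon V_1\times V_2\to\{0,1,2\}$ by $f(v,u)=g(u)$ for every $v\in V_1$ and $u\in V_2$; that is, $f$ replicates $g$ without change across each copy $\{v\}\times V_2$ of $H$. Next I would verify that $f$ is a Roman dominating function on $G\Box H$. Suppose $f(v,u)=0$, so that $g(u)=0$. Since $g$ is a Roman dominating function on $H$, there is a vertex $u'\in N_H(u)$ with $g(u')=2$. Because $u\sim u'$ in $H$ while the first coordinates agree, we have $(v,u)\sim(v,u')$ in $G\Box H$, and $f(v,u')=g(u')=2$; hence $(v,u)$ is dominated, as required.

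Then I would compute the weight fiber by fiber, obtaining
$$f(V)=\sum_{v\in V_1}\sum_{u\in V_2}g(u)=n_1\,g(V_2)=n_1\gamma_R(H),$$
so that $\gamma_R(G\Box H)\le n_1\gamma_R(H)$. Finally, running the identical argument with the roles of $G$ and $H$ interchanged (using $G\Box H\cong H\Box G$, which is immediate from the symmetric adjacency rule in the definition) yields $\gamma_R(G\Box H)\le n_2\gamma_R(G)$, and taking the smaller of the two bounds gives the stated inequality.

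I do not expect a serious obstacle here: the single point deserving care is that the dominating vertex $(v,u')$ produced above lies in the \emph{same} fiber $\{v\}\times V_2$ as $(v,u)$, so the argument uses only the intra-fiber adjacencies that the Cartesian product inherits from $H$ and never relies on any cross-fiber edge. This is precisely why copying $g$ verbatim suffices, and it is the place where one should state the verification explicitly rather than by appeal to intuition.
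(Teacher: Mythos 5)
Your proposal is correct and is essentially the paper's own proof: the paper likewise copies a minimum-weight Roman dominating function of one factor identically across all fibers, verifies the Roman condition using only intra-fiber edges, computes the weight as order-of-one-factor times Roman domination number of the other, and obtains the second bound by the symmetric argument. The only cosmetic difference is that the paper lifts a $\gamma_R(G)$-function to get $n_2\gamma_R(G)$ first, whereas you lift a $\gamma_R(H)$-function to get $n_1\gamma_R(H)$ first.
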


\begin{proof}
 Let $f_1$ be a $\gamma_R(G)$-function. Let $f:V_1\times V_2\rightarrow \{0,1,2\}$ be a function  defined by $f(u,v)=f_1(u)$. If there exists a vertex $(x,y)\in V_1\times V_2$  such that $f(x,y)=0$, then $f_1(x)=0$. Since $f_1$ is Roman, there exists $u\in V_1$, adjacent to $x$, such that $f_1(u)=2$. Hence, we obtain that $f(u,y)=2$ and $(x,y)$ is adjacent to $(u,y)$. So, $f$ is a Roman dominating function. Therefore,
$$\gamma_R(G\square H)\le \sum_{(u,v)\in V_1\times V_2}f(u,v)=\sum_{v\in V_2}\sum_{u\in V_1} f_1(u)=\sum_{v\in V_2}\gamma_R(G)=n_2\gamma_R(G).$$
Analogously we obtain that $\gamma_R(G\square H)\le n_1\gamma_R(H)$ and the result follows.
\end{proof}

The above inequality is tight. It is achieved, for instance, for  $G=P_{n}$, a path graph of order $n$, and $H=S_{1,r}$, a star graph with $r\ge 2$ leaves. In this case  we have $\gamma_R(S_{1,r})=2=2\gamma(S_{1,r})$, $\gamma(P_{n})=\left\lceil\frac{n}{3}\right\rceil$,  $\gamma_R(P_{n})=\frac{2n+1}{3}$ if $n\equiv 1(3)$ and $\gamma_R(P_{n})=2\left\lceil\frac{n}{3}\right\rceil$ if $n\not\equiv 1(3)$. So,  $\gamma_R(G\square H)= 2n=n\gamma_R(H)$.

\begin{corollary}
For any graphs $G$ and $H$ of order $n_1$ and $n_2$, respectively,
 $$\gamma_R(G\Box H)\le 2\min\{n_1\gamma(H),n_2\gamma(G)\}.$$
\end{corollary}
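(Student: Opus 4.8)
The plan is to obtain this bound by directly chaining Theorem \ref{superior} with the right-hand inequality of Lemma \ref{lema-roman-dom}. Theorem \ref{superior} already gives the upper bound in terms of the Roman domination numbers of the factors, namely
\[
\gamma_R(G\Box H)\le \min\{n_1\gamma_R(H),\,n_2\gamma_R(G)\},
\]
so the task reduces to replacing each $\gamma_R$ of a factor by $2\gamma$ of that factor and then extracting the constant.

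First I would apply Lemma \ref{lema-roman-dom} to each factor separately, using only the upper inequality $\gamma_R(\,\cdot\,)\le 2\gamma(\,\cdot\,)$. This yields $\gamma_R(H)\le 2\gamma(H)$ and $\gamma_R(G)\le 2\gamma(G)$, whence $n_1\gamma_R(H)\le 2n_1\gamma(H)$ and $n_2\gamma_R(G)\le 2n_2\gamma(G)$. Since the minimum of two quantities is monotone in each argument, these two bounds give
\[
\min\{n_1\gamma_R(H),\,n_2\gamma_R(G)\}\le \min\{2n_1\gamma(H),\,2n_2\gamma(G)\}.
\]
Combining this with the inequality from Theorem \ref{superior} displayed above immediately produces $\gamma_R(G\Box H)\le \min\{2n_1\gamma(H),\,2n_2\gamma(G)\}$.

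The final step I would take is to pull the common factor $2$ out of the minimum, observing that for any nonnegative reals $a,b$ one has $\min\{2a,2b\}=2\min\{a,b\}$, so that $\min\{2n_1\gamma(H),\,2n_2\gamma(G)\}=2\min\{n_1\gamma(H),\,n_2\gamma(G)\}$, which is exactly the claimed bound.

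I do not anticipate any genuine obstacle here: the corollary is a routine consequence of two results already established in the excerpt, and the only point requiring even a moment's care is the commutation of the positive constant $2$ with the $\min$ operation, which is elementary. In particular, no new combinatorial construction on $G\Box H$ is needed, since all the structural work was carried out in the proof of Theorem \ref{superior}.
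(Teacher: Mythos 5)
Your proof is correct and follows exactly the route the paper intends: the corollary is stated immediately after Theorem \ref{superior}, and chaining that theorem with the bound $\gamma_R(\cdot)\le 2\gamma(\cdot)$ from Lemma \ref{lema-roman-dom}, then factoring the $2$ out of the minimum, is precisely the argument. No issues to report.
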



\begin{lemma}{\em \cite{roman}}
\label{lemaRoman}
A graph $G$ is Roman if and only if it has a $\gamma_R(G)$-function $f=(A_0,A_1,A_2)$ with $|A_1|=0$.
\end{lemma}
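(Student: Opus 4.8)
The plan is to prove both implications directly from the definition of a Roman graph (namely $\gamma_R(G)=2\gamma(G)$) together with the upper bound $\gamma_R(G)\le 2\gamma(G)$ supplied by Lemma \ref{lema-roman-dom}. The whole argument rests on a single elementary observation, used in two directions: the weight-$2$ vertices of a Roman dominating function carry the domination, and a minimum dominating set can be turned into a Roman dominating function by assigning the value $2$ everywhere on it.

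For the forward direction, I would assume $G$ is Roman and exhibit a $\gamma_R(G)$-function with no vertex of weight $1$. The natural candidate is built from a $\gamma(G)$-set $S$: set $f(u)=2$ for all $u\in S$ and $f(u)=0$ otherwise. Since every vertex outside $S$ is adjacent to some vertex of $S$, this $f$ satisfies the Roman condition, and its weight is $2|S|=2\gamma(G)$. Because $G$ is Roman we have $2\gamma(G)=\gamma_R(G)$, so $f$ is in fact a $\gamma_R(G)$-function; and by construction its associated set $A_1$ is empty, giving $|A_1|=0$.

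For the backward direction, I would assume the existence of a $\gamma_R(G)$-function $f=(A_0,A_1,A_2)$ with $|A_1|=0$ and deduce $\gamma_R(G)=2\gamma(G)$. The key point is that $A_2$ by itself dominates $G$: every vertex of $A_0$ has, by the Roman condition, a neighbor in $A_2$, and $A_1$ is empty, so $N[A_2]=V$. Hence $\gamma(G)\le|A_2|$. Since $|A_1|=0$, the weight of $f$ equals $2|A_2|$, whence $\gamma_R(G)=2|A_2|\ge 2\gamma(G)$. Combining this with $\gamma_R(G)\le 2\gamma(G)$ from Lemma \ref{lema-roman-dom} forces equality $\gamma_R(G)=2\gamma(G)$, i.e.\ $G$ is Roman.

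I do not anticipate a genuine obstacle: each direction collapses to the fact that, when there are no weight-$1$ vertices, the set $A_2$ is exactly a dominating set, so that weight and twice-the-domination-number coincide. The only step needing a moment of care is verifying in the forward direction that the constructed function is of \emph{minimum} weight, and this is precisely the place where the hypothesis $\gamma_R(G)=2\gamma(G)$ enters; without it, the function $f$ built from $S$ would still be a valid Roman dominating function but not necessarily an optimal one.
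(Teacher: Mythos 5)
Your proof is correct and complete. Note that the paper itself gives no proof of this lemma: it is quoted directly from \cite{roman} (Cockayne et al.) as a known result, so there is nothing in the paper to compare against. Your argument is the natural, self-contained one: in the forward direction, assigning $2$ to every vertex of a $\gamma(G)$-set yields a Roman dominating function of weight $2\gamma(G)$, which is optimal precisely because $G$ is Roman; in the backward direction, when $|A_1|=0$ the set $A_2$ is a dominating set (every vertex of $A_0$ has a neighbor in $A_2$, and $V=A_0\cup A_2$), so $\gamma_R(G)=2|A_2|\ge 2\gamma(G)$, and Lemma \ref{lema-roman-dom} closes the gap to equality. Both directions are airtight, and the point you flag as the only delicate step---that minimality of the constructed function is exactly where the hypothesis $\gamma_R(G)=2\gamma(G)$ is used---is indeed the crux of the forward implication.
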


\begin{theorem}\label{elde-k}
Let $G$ be a graph of order $n$ and let $H$ be a graph.
\begin{itemize}
\item[{\rm (i)}]
If $G$ has  at least one connected component of order greater than two, then
$$\gamma_R(G\Box H)\le (n+1)\gamma_R(H)-2\gamma(H).$$
\item[{\rm (ii)}]  If $G$ is a Roman graph, then
$$\gamma_R(G\Box H)\le 2n\left(\gamma_R(H)-\gamma(H)\right)+2\gamma(G)\left(2\gamma(H)-\gamma_R(H)\right).$$
\end{itemize}
\end{theorem}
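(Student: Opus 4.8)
The plan is to build explicit Roman dominating functions on $G\Box H$ one $H$-layer at a time, where by a \emph{layer} I mean a copy $\{u\}\times V_2$ of $H$ indexed by a vertex $u\in V_1$. Fix a $\gamma_R(H)$-function $g=(B_0,B_1,B_2)$ and put $R=B_1\cup B_2$; since every vertex of $B_0$ has a neighbour in $B_2$, the set $R$ is a dominating set of $H$. I will use three prototype assignments on a single layer. A \emph{plain} layer copies $g$ (weight $\gamma_R(H)=2|B_2|+|B_1|$, and self-sufficient as a Roman function). A \emph{heavy} layer puts $2$ on all of $R$ (weight $2|R|=2(|B_1|+|B_2|)$, and self-sufficient because its only zeros, the vertices of $B_0$, each see a $2$ on a neighbour in $B_2\subseteq R$). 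A \emph{cheap} layer puts $2$ only on $B_2$ (weight $2|B_2|$). The cheap layer is the crucial device: its zeros are $B_0\cup B_1$, and while $B_0$ is dominated inside the layer by $B_2$, the part $B_1$ is not. Hence a cheap layer placed at $u$ is admissible only when every $(u,v)$ with $v\in B_1$ is dominated from outside, i.e.\ when $u$ has a neighbour $u'$ in $G$ carrying a \emph{heavy} layer, so that $f(u',v)=2$ for all $v\in R\supseteq B_1$. Thus the only structural thing to verify for any mixture of these layers is that each cheap layer is adjacent in $G$ to a heavy layer.

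For part (i) I would use the hypothesis to choose a vertex $a$ in a component of order at least three. Such a component is connected on $\ge 3$ vertices and so contains a vertex of degree $\ge 2$; I may therefore assume $\deg_G(a)\ge 2$ and fix two distinct neighbours $w,b$ of $a$. I then place a heavy layer at $a$, cheap layers at $w$ and $b$, and plain layers at the remaining $n-3$ vertices. Both cheap layers $w,b$ are adjacent to the heavy layer $a$, so the resulting map is a Roman dominating function. Its weight is $2(|B_1|+|B_2|)+2\cdot 2|B_2|+(n-3)(2|B_2|+|B_1|)=(n-1)|B_1|+2n|B_2|$, and a direct computation gives $\big((n+1)\gamma_R(H)-2\gamma(H)\big)-\big((n-1)|B_1|+2n|B_2|\big)=2\big(|B_1|+|B_2|-\gamma(H)\big)$, which is $\ge 0$ because $B_1\cup B_2$ is a dominating set of $H$. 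This establishes (i).

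For part (ii), since $G$ is Roman, Lemma \ref{lemaRoman} supplies a $\gamma_R(G)$-function $(A_0,\emptyset,A_2)$; here $A_2$ is a dominating set of $G$ with $|A_2|=\gamma_R(G)/2=\gamma(G)$, and every vertex of $A_0=V_1\setminus A_2$ has a neighbour in $A_2$. I place a heavy layer at each $u\in A_2$ and a cheap layer at each $u\in A_0$. Because $A_2$ dominates $G$, every cheap layer is adjacent to a heavy layer, so once more this is a Roman dominating function. Its weight is $\gamma(G)\cdot 2(|B_1|+|B_2|)+(n-\gamma(G))\cdot 2|B_2|=2\gamma(G)|B_1|+2n|B_2|$, and expanding $\gamma_R(H)=2|B_2|+|B_1|$ in the target, I find that the target minus this weight equals $2\,(n-2\gamma(G))\,\big(|B_1|+|B_2|-\gamma(H)\big)$.

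The arithmetic identities above are routine; the two points needing care are the admissibility check (each cheap layer must touch a heavy layer, handled by the degree-$\ge 2$ vertex in (i) and by $A_2$ being dominating in (ii)) and, above all, the sign of the slack in (ii). The factor $|B_1|+|B_2|-\gamma(H)\ge 0$ is automatic, but $n-2\gamma(G)$ is not nonnegative for a general graph, and the bound would fail without it. This is exactly where the Roman hypothesis on $G$ is consumed: $G$ Roman gives $2\gamma(G)=\gamma_R(G)\le n$, since the all-ones function on $V_1$ is Roman of weight $n$; hence $n\ge 2\gamma(G)$ and the slack is $\ge 0$. I expect this sign control, together with the decision to double on $R=B_1\cup B_2$ rather than on a \emph{minimum} dominating set of $H$ (doubling on a minimum dominating set need not dominate the $B_1$-vertices of a cheap layer), to be the main conceptual content of the proof, with everything else being bookkeeping.
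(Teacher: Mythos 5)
Your proof is correct, and its core construction is the same as the paper's: the paper also fixes a $\gamma_R(H)$-function $f_2=(B_0,B_1,B_2)$ and builds a Roman dominating function on $G\Box H$ whose $H$-layers are exactly your three prototypes, obtained by keeping the value $1$ on $B_1$ (your plain layer), raising it to $2$ (your heavy layer), or lowering it to $0$ (your cheap layer), with the same admissibility requirement that each cheap layer sit over a $G$-vertex dominated by a heavy one. In part (ii) the two arguments coincide: the paper assigns heavy layers to $A_2$ and cheap layers to $A_0$ for a $\gamma_R(G)$-function with $A_1=\emptyset$ supplied by Lemma \ref{lemaRoman}, precisely your assignment. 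In part (i), however, your instantiation genuinely differs, and in a useful way: the paper distributes layer types according to an arbitrary $\gamma_R(G)$-function $(A_0,A_1,A_2)$, obtains the weight $n\gamma_R(H)-|B_1|(|A_0|-|A_2|)$, and then invokes the claim that a component of order greater than two forces $|A_0|\ge|A_2|+1$ --- a claim asserted without proof (it is true, but needs the observation that $|A_0|\le|A_2|$ would give weight $n+|A_2|-|A_0|\ge n$, while such a component admits an RDF of weight at most $n-1$). Your configuration --- one heavy layer at a vertex of degree at least two and cheap layers at two of its neighbours --- realizes $|A_0|-|A_2|=1$ with an explicit, not necessarily minimum, RDF on $G$, sidestepping that claim entirely. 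You also make explicit the sign condition $n\ge 2\gamma(G)$ that part (ii) requires when the inequality $|B_1|\ge 2\gamma(H)-\gamma_R(H)$ (Lemma \ref{remark-Rom-dom}(ii), equivalently your $|B_1|+|B_2|\ge\gamma(H)$) is multiplied by $n-2\gamma(G)$; the paper uses this silently, so your treatment closes a small gap in its exposition. Your slack identities, $2\bigl(|B_1|+|B_2|-\gamma(H)\bigr)$ in (i) and $2(n-2\gamma(G))\bigl(|B_1|+|B_2|-\gamma(H)\bigr)$ in (ii), check out and are arithmetically equivalent to the paper's bound (\ref{diferencia}) combined with Lemma \ref{remark-Rom-dom}(ii).
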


\begin{proof}
Let $f_1=(A_0,A_1,A_2)$ be a $\gamma_R(G)$-function and let $f_2=(B_0,B_1,B_2)$ be $\gamma_R(H)$-function. We  define the map $f:V_1\times V_1\rightarrow \{0,1,2\}$ as follows.
\begin{itemize}
\item $f(u,v)=f_2(v)$ for every $(u,v)\notin (A_0\cup A_2)\times B_1$.
\item If $(u,v)\in A_0\times B_1$, then $f(u,v)=0$.
\item If $(u,v)\in A_2\times B_1$, then $f(u,v)=2$.
\end{itemize}
Since every vertex from $A_0\times B_1$ has a neighbor in $A_2\times B_1$ and every vertex of $V_1\times B_0$ has a neighbor in $V_1\times B_2$, we have that $f$ is a Roman dominating function on $G\square H$. Thus,
\begin{equation}\label{diferencia}
\gamma_R(G\square H)\le n\gamma_R(H)-|A_0||B_1|+|A_2||B_1|=n\gamma_R(H)-|B_1|(|A_0|-|A_2|).
\end{equation}
Since $G$ has at least one connected component of order greater than two, it is satisfied that $|A_0|\ge |A_2|+1$ and,  by Lemma \ref{remark-Rom-dom} (ii),
$|B_1|(|A_0|-|A_2|)\ge 2\gamma(H)- \gamma_R(H)$. Therefore,   by (\ref{diferencia}) we deduce (i).

Now, if $G$ is a Roman graph, then by Lemma \ref{lemaRoman} there exists a $\gamma_R(G)$-function $f=(A_0,A_1,A_2)$ with $|A_1|=0$. Thus, $|A_0|+|A_2|=n$ and, as a consequence,  $|A_0|-|A_2|=n-2\gamma(G)$. Therefore, by (\ref{diferencia}) we deduce (ii):
\begin{align*}
\gamma_R(G\square H)&\le n\gamma_R(H)-|B_1|(|A_0|-|A_2|)\\
                     &\le n\gamma_R(H)-\left(2\gamma(H)-\gamma_R(H)\right)\left(n-2\gamma(G)\right)\\
                     & =2n\left(\gamma_R(H)-\gamma(H)\right)+2\gamma(G)\left(2\gamma(H)-\gamma_R(H)\right).
\end{align*}
\end{proof}

For any Roman graph $H$, Theorem \ref{elde-k} leads to $\gamma_R(G\Box H)\le 2n\gamma(H)$. Now, for any non-Roman graph $H$ we have $\gamma_R(H)-2\gamma(H)\le -1$ and, as a consequence, Theorem \ref{elde-k} leads to the following result.
\begin{corollary}
Let $G$ be a graph of order $n$ and let $H$ be a graph. If $G$ has  at least one connected component of order greater than two and  $H$ is not Roman, then
$$\gamma_R(G\Box H)\le n\gamma_R(H)-1.$$
\end{corollary}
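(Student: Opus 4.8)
The plan is to read off the result directly from Theorem \ref{elde-k} (i), since the hypotheses match exactly: $G$ has at least one connected component of order greater than two, which is precisely what part (i) requires. Thus I would start by invoking that theorem to obtain the bound
$$\gamma_R(G\Box H)\le (n+1)\gamma_R(H)-2\gamma(H).$$
The entire task then reduces to manipulating the right-hand side so that the factor $(n+1)$ becomes $n$ at the cost of a $-1$ correction term, and for this the non-Roman hypothesis on $H$ must be brought into play.

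First I would rewrite $(n+1)\gamma_R(H)-2\gamma(H)$ as $n\gamma_R(H)+\bigl(\gamma_R(H)-2\gamma(H)\bigr)$, isolating the quantity $\gamma_R(H)-2\gamma(H)$. The key step is to show that this quantity is at most $-1$. I would argue as follows: by Lemma \ref{lema-roman-dom} we always have $\gamma_R(H)\le 2\gamma(H)$, and by the definition of a Roman graph, the equality $\gamma_R(H)=2\gamma(H)$ holds if and only if $H$ is Roman. Since $H$ is assumed not to be Roman, the inequality is strict, giving $\gamma_R(H)<2\gamma(H)$. Because both $\gamma_R(H)$ and $2\gamma(H)$ are integers, strict inequality forces a gap of at least one unit, so $\gamma_R(H)-2\gamma(H)\le -1$.

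Substituting this estimate back yields
$$\gamma_R(G\Box H)\le n\gamma_R(H)+\bigl(\gamma_R(H)-2\gamma(H)\bigr)\le n\gamma_R(H)-1,$$
which is the desired conclusion. I do not anticipate a genuine obstacle here, as the argument is a short combination of Theorem \ref{elde-k} (i) with an integrality observation. The only point that requires a moment of care is the justification that $\gamma_R(H)-2\gamma(H)\le -1$ rather than merely $<0$; this is where the discreteness of the two domination parameters is essential, and it is the sole place where the hypothesis ``$H$ is not Roman'' is actually used.
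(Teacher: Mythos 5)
Your proposal is correct and is essentially identical to the paper's own argument: the paper likewise applies Theorem \ref{elde-k} (i) and then uses the observation that a non-Roman graph $H$ satisfies $\gamma_R(H)-2\gamma(H)\le -1$ (via Lemma \ref{lema-roman-dom} and integrality) to absorb the extra $\gamma_R(H)$ term into the $-1$. No gaps; the reasoning matches the paper step for step.
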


\begin{proposition}{\em \cite{roman}} \label{gamma+1}
If $G$ is a connected graph of order $n$, then $\gamma_R(G)=\gamma(G)+1$ if and only if there exists a vertex of $G$ of degree $n-\gamma(G)$.
\end{proposition}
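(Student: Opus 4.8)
The plan is to prove the two implications separately, and throughout I will use a baseline fact: for a connected graph $G$ of order $n\ge 2$ one always has $\gamma_R(G)\ge \gamma(G)+1$. To establish this I would suppose instead that some $\gamma_R(G)$-function $f=(B_0,B_1,B_2)$ has weight $\gamma(G)$ and observe that $B_1\cup B_2$ is a dominating set, so $|B_1|+|B_2|\ge \gamma(G)=2|B_2|+|B_1|$, which forces $B_2=\emptyset$; the Roman condition then forces $B_0=\emptyset$, whence $B_1=V$ and $G$ is edgeless, contradicting connectivity with $n\ge 2$. (The case $n=1$ is excluded from the statement since there $\gamma_R(G)=\gamma(G)$.)

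For the ``if'' direction, suppose $v$ is a vertex with $\deg(v)=n-\gamma(G)$. I would set $f(v)=2$, put $f(u)=0$ for every $u\in N(v)$, and $f(w)=1$ on each of the remaining $n-|N[v]|=\gamma(G)-1$ vertices. This $f$ is Roman because the only vertices of value $0$ are neighbors of $v$, and its weight is $2+(\gamma(G)-1)=\gamma(G)+1$, giving $\gamma_R(G)\le \gamma(G)+1$. Combined with the baseline inequality this yields $\gamma_R(G)=\gamma(G)+1$.

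For the ``only if'' direction, suppose $\gamma_R(G)=\gamma(G)+1$ and fix a $\gamma_R(G)$-function $f=(B_0,B_1,B_2)$. By Lemma \ref{remark-Rom-dom}(i), $|B_2|\le \gamma_R(G)-\gamma(G)=1$. If $|B_2|=0$ then, as above, $B_1=V$; choosing an edge $xy$ (which exists by connectivity) and resetting $f(x)=2$, $f(y)=0$ gives another $\gamma_R(G)$-function with $|B_2|=1$, so I may assume $B_2=\{v\}$. Then $|B_1|=\gamma(G)-1$ and $|B_0|=n-\gamma(G)$, and since $v$ is the unique vertex of value $2$ we have $B_0\subseteq N(v)$. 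The crucial step is to show $N(v)\cap B_1=\emptyset$: if some $w\in N(v)$ had $f(w)=1$, then lowering $f(w)$ to $0$ keeps $f$ Roman (as $w\sim v$ and $f(v)=2$) while dropping the weight to $\gamma(G)$, contradicting $\gamma_R(G)=\gamma(G)+1$. Hence $N(v)=B_0$ and $\deg(v)=|B_0|=n-\gamma(G)$, as required.

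The main obstacle I anticipate is this ``only if'' direction, and specifically forcing $\deg(v)$ to equal $n-\gamma(G)$ exactly rather than merely $\ge n-\gamma(G)$; the weight-reduction argument establishing $N(v)\cap B_1=\emptyset$ is the heart of the proof, since it is what upgrades the easy inequality to an equality. The normalization of the degenerate case $|B_2|=0$ into $|B_2|=1$ by switching values along an edge is a minor but necessary technicality that relies on connectivity.
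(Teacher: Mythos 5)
Your proof is correct, but there is nothing in the paper to compare it against: Proposition \ref{gamma+1} is stated without proof, being quoted from the reference \cite{roman}, so the paper's ``own proof'' is simply a citation. Judged on its own merits, your argument is sound and is essentially the classical one from that reference. The ``if'' direction (value $2$ on the high-degree vertex, $0$ on its neighborhood, $1$ elsewhere, giving weight exactly $\gamma(G)+1$) is the standard construction, and it correctly pairs with the baseline strict inequality $\gamma_R(G)\ge\gamma(G)+1$ for connected graphs of order at least $2$. The ``only if'' direction is also handled properly: the bound $|B_2|\le\gamma_R(G)-\gamma(G)=1$ is exactly the paper's Lemma \ref{remark-Rom-dom}(i) (which the paper proves independently, so invoking it is legitimate), the edge-switching normalization disposing of the degenerate case $|B_2|=0$ is valid, and the weight-reduction step showing $N(v)\cap B_1=\emptyset$ is precisely what converts $\deg(v)\ge n-\gamma(G)$ into equality. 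You were also right to flag the order-one case: as literally transcribed the proposition fails for $G=K_1$ (the lone vertex has degree $0=n-\gamma(G)$ yet $\gamma_R(G)=\gamma(G)$), so the implicit hypothesis $n\ge 2$ that your baseline inequality requires is in fact necessary for the statement itself, not a gap in your argument.
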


From Proposition \ref{gamma+1} and Theorem \ref{elde-k} we derive the following  result.

\begin{proposition}\label{corochulo}
If $G$ is a graph of order $n_1$ having at least one connected component of order greater than two  and $H$ is  a connected graph of order $n_2$ having a vertex of degree $n_2-\gamma(H)$, then
$$\gamma_R(G\Box H)\le n_1(\gamma(H)+1)-\gamma(H)+1.$$
\end{proposition}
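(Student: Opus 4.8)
The plan is to derive the bound directly from the two preceding results, with essentially no new combinatorial work required. First I would extract the relevant information about $H$: since $H$ is a connected graph of order $n_2$ possessing a vertex of degree $n_2-\gamma(H)$, the hypothesis is exactly the one characterized in Proposition \ref{gamma+1}, so that result applies verbatim and yields $\gamma_R(H)=\gamma(H)+1$.

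Next I would invoke Theorem \ref{elde-k}(i). The assumption that $G$ has at least one connected component of order greater than two is precisely what that part of the theorem demands, so taking $n=n_1$ there gives
$$\gamma_R(G\Box H)\le (n_1+1)\gamma_R(H)-2\gamma(H).$$

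Finally I would substitute $\gamma_R(H)=\gamma(H)+1$ into this inequality and simplify:
\begin{align*}
\gamma_R(G\Box H) &\le (n_1+1)(\gamma(H)+1)-2\gamma(H)\\
&= n_1\gamma(H)+n_1+\gamma(H)+1-2\gamma(H)\\
&= n_1(\gamma(H)+1)-\gamma(H)+1,
\end{align*}
which is exactly the claimed bound.

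The only point to verify is the algebraic expansion in the last display, which is entirely routine, so there is no genuine obstacle here. All of the substance of the statement is already contained in Theorem \ref{elde-k} and Proposition \ref{gamma+1}; the proposition is best understood as a clean specialization of the upper bound to those graphs $H$ attaining the extremal relation $\gamma_R(H)=\gamma(H)+1$.
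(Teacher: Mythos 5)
Your proof is correct and follows exactly the route the paper intends: apply Proposition \ref{gamma+1} to $H$ to get $\gamma_R(H)=\gamma(H)+1$, substitute into Theorem \ref{elde-k}(i), and simplify. The algebra checks out, so there is nothing to add.
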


The above inequality is tight. For instance, if $G$ is a path graph of order three and $H$ is the star  $K_{1,3}$ with one of its edges subdivided, then we have  $\gamma(H)=2$ and $\gamma_R(G\Box H)=8$. So, Proposition \ref{corochulo} leads to the exact value of $\gamma_R(G\Box H)$.

\begin{theorem}\label{flojito}
For any graphs $G$ and $H$ of order $n_1$ and $n_2$, respectively,
$$\gamma_R(G\Box H)\le 2 \gamma(G)\gamma(H)+( n_1-\gamma(G))(n_2-\gamma(H)).$$
\end{theorem}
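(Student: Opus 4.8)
The plan is to prove this upper bound by exhibiting an explicit Roman dominating function on $G\Box H$ whose weight equals the right-hand side. First I would fix a $\gamma(G)$-set $D_1\subseteq V_1$ and a $\gamma(H)$-set $D_2\subseteq V_2$, and write $\overline{D_1}=V_1\setminus D_1$ and $\overline{D_2}=V_2\setminus D_2$, so that $|D_1|=\gamma(G)$, $|\overline{D_1}|=n_1-\gamma(G)$, $|D_2|=\gamma(H)$ and $|\overline{D_2}|=n_2-\gamma(H)$. These two partitions of $V_1$ and $V_2$ induce a partition of $V_1\times V_2$ into the four blocks $D_1\times D_2$, $D_1\times\overline{D_2}$, $\overline{D_1}\times D_2$ and $\overline{D_1}\times\overline{D_2}$.

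Next I would define $f:V_1\times V_2\to\{0,1,2\}$ by setting $f\equiv 2$ on $D_1\times D_2$, $f\equiv 1$ on $\overline{D_1}\times\overline{D_2}$, and $f\equiv 0$ on the two mixed blocks $D_1\times\overline{D_2}$ and $\overline{D_1}\times D_2$. The weight is then immediate from the block sizes: $f(V_1\times V_2)=2\,|D_1\times D_2|+|\overline{D_1}\times\overline{D_2}|=2\gamma(G)\gamma(H)+(n_1-\gamma(G))(n_2-\gamma(H))$, which is exactly the claimed bound. So once $f$ is shown to be Roman, the theorem follows.

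The one step that needs verification --- and the main (albeit mild) obstacle --- is that $f$ is indeed a Roman dominating function, i.e.\ that every vertex labelled $0$ has a neighbor labelled $2$. The zero-labelled vertices come in two families, and each is handled by invoking the domination property in a single coordinate. For $(u,v)\in D_1\times\overline{D_2}$, since $v\notin D_2$ and $D_2$ dominates $H$, there is $w\in D_2$ with $w\sim v$; then $(u,w)\in D_1\times D_2$ has $f(u,w)=2$ and is adjacent to $(u,v)$ in $G\Box H$ (same first coordinate, adjacent second coordinates). Symmetrically, for $(u,v)\in\overline{D_1}\times D_2$, since $u\notin D_1$ and $D_1$ dominates $G$, there is $u'\in D_1$ with $u'\sim u$, and then $(u',v)\in D_1\times D_2$ has value $2$ and is adjacent to $(u,v)$. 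Vertices with value $1$ or $2$ impose no condition, so $f$ is Roman and the bound holds. The key subtlety to keep in mind is that only Cartesian (one-coordinate) adjacencies are available, which is precisely why the product $D_1\times D_2$ of the two dominating sets must serve as the set of weight-$2$ vertices: placing the $2$'s on a genuine product of dominating sets is what guarantees a $2$-neighbor in either coordinate direction.
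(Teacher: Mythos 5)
Your proof is correct and follows essentially the same approach as the paper: assigning $2$ on the product of the two dominating sets, $1$ on the product of their complements, and $0$ on the mixed blocks, then verifying the Roman condition via one-coordinate adjacencies. The only difference is that you spell out the domination check explicitly, whereas the paper simply asserts that $B_2$ dominates $B_0$.
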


\begin{proof}
 Let $S_1$ be a $\gamma(G)$-set and let $S_2$ be a  $\gamma(H)$-set.  Let $B_2=S_1\times S_2$, $B_1=(V_1-S_1)\times (V_2-S_2)$ and $B_0=S_1\times (V_2-S_2)\cup (V_1-S_1)\times S_2$. Since $B_2$ dominates $B_0$, the map $f:V_1\times V_2\rightarrow \{0,1,2\}$ defined by
 $f(u,v)=i$, for every $(u,v)\in B_i$, is a Roman function on $G\Box H$. Therefore, the  result is obtained as follows,
 \begin{align*}
  \gamma_R(G\Box H)&\le 2|B_2|+|B_1|\\
  &=2|S_1||S_2|+|V_1-S_1||V_2-S_2|\\
  &=2\gamma(G)\gamma(H)+( n_1-\gamma(G))(n_2-\gamma(H)).
 \end{align*}
\end{proof}

We know that $\gamma_R(P_{3k+2})=2\gamma(P_{3k+2})=2(k+1)$, $\gamma_R(P_{3k+1})=2k+1$ and $\gamma(P_{3k+1})=k+1$. So,  Theorem \ref{flojito} leads to $\gamma_R \left( P_{3k+1} \square P_{3k+2}\right)\le 6k^2+6k+2$,  while by Theorem \ref{superior} we only get $\gamma_R \left(P_{3k+1} \square P_{3k+2}\right)\le 6k^2+7k+2$ and by Theorem \ref{elde-k} we only get $\gamma_R \left(P_{3k+2} \square P_{3k+1}\right)\le 6k^2+7k+1$.

From the above results we have that bounds on the Roman domination number  and the domination number of the factor graphs  lead to bounds on the Roman domination number of Cartesian product graphs. For example, it is well-known that  for any graph $G$ of order $n$ and maximum degree $\Delta$ it follows $\gamma(G)\ge \displaystyle\frac{n}{\Delta+1}$,  cf. \cite{bookdom1}.  The following straightforward result allow us to derive  several bounds on $\gamma_R(G\Box H)$.

\begin{remark}
For any graph $G\in \mathfrak{F}$  of order $n$ and minimum degree $\delta$,  $ \gamma(G)\le \frac{n}{\delta+1}.$ As a consequence, for any  $\delta$-regular graph   $G\in \mathfrak{F}$ it follows, $\gamma(G)=\frac{n}{\delta+1}.$
\end{remark}


An example of result derived from the above remark, Theorem \ref{roman-cartesiano-inferior1}  and Theorem \ref{superior}, is the following one.

\begin{proposition}
For any $\delta$-regular graph $G\in \mathfrak{F}$  of order $n$,
$$\frac{2n}{\delta+1}\le \gamma_R(G\Box K_2)\le \frac{4n}{\delta+1}.$$
\end{proposition}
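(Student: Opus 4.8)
The plan is to read both inequalities off directly from the results already established for the Cartesian product, specializing the second factor to $H=K_2$ and exploiting the exact value of $\gamma(G)$ available for $\delta$-regular members of $\mathfrak{F}$. First I would record the two numerical facts about the complete graph on two vertices, namely $\gamma(K_2)=1$ and $\gamma_R(K_2)=2$ (an optimal Roman dominating function places a single $2$ on one endpoint), and I would invoke the Remark immediately preceding the statement, which gives the identity $\gamma(G)=\frac{n}{\delta+1}$ for any $\delta$-regular graph $G\in\mathfrak{F}$.

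For the lower bound, since $G\in\mathfrak{F}$, Theorem \ref{roman-cartesiano-inferior1} applies with $H=K_2$ and yields
$$\gamma_R(G\Box K_2)\ge \gamma(G)\gamma_R(K_2)=2\gamma(G)=\frac{2n}{\delta+1}.$$
For the upper bound, I would apply Theorem \ref{superior} with $n_1=n$ and $n_2=2$, retaining the second term of the minimum, to get $\gamma_R(G\Box K_2)\le n_2\gamma_R(G)=2\gamma_R(G)$. Then Lemma \ref{lema-roman-dom} gives $\gamma_R(G)\le 2\gamma(G)$, whence
$$\gamma_R(G\Box K_2)\le 4\gamma(G)=\frac{4n}{\delta+1},$$
again using the regular-graph identity to rewrite $\gamma(G)$.

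There is essentially no genuine obstacle here: both bounds are immediate specializations, and the whole content of the statement is the combination of the three earlier facts with the observation that $K_2$ is a Roman graph with $\gamma(K_2)=1$. The only point demanding mild care is ensuring that the \emph{exact} equality $\gamma(G)=\frac{n}{\delta+1}$ (valid for $\delta$-regular $G\in\mathfrak{F}$), rather than merely the upper estimate $\gamma(G)\le\frac{n}{\delta+1}$, is substituted in the lower bound, so that the displayed quantity $\frac{2n}{\delta+1}$ is correctly obtained in the right direction.
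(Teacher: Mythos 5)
Your proposal is correct and follows exactly the derivation the paper intends: the lower bound from Theorem \ref{roman-cartesiano-inferior1} with $H=K_2$, the upper bound from Theorem \ref{superior} combined with Lemma \ref{lema-roman-dom}, and the identity $\gamma(G)=\frac{n}{\delta+1}$ from the preceding remark. Your added observation that the exact equality (not merely the upper estimate) is needed for the lower bound is a fair point of care, and nothing is missing.
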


\section{Strong product graphs}

 In this section we obtain some results on the  Roman domination number of strong product graphs. To begin with, we recall the following well-known result,  cf. \cite{BookImrich}.
\begin{theorem}{\em \cite{BookImrich}} \label{thetrivial} For any graphs $G$ and $H$,
$$\max\{P_2(G)\gamma(H),\gamma(G)P_2(H)\}\le \gamma(G\boxtimes H)\le \gamma(G)\gamma(H).$$
\end{theorem}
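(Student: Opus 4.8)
The plan is to prove the upper and lower bounds separately, and to handle the two terms inside the maximum of the lower bound by a symmetry argument, so that it suffices to establish $\gamma(G\boxtimes H)\ge P_2(G)\gamma(H)$ and then interchange the roles of the factors.

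For the upper bound I would take a $\gamma(G)$-set $S_1$ and a $\gamma(H)$-set $S_2$ and verify that $S_1\times S_2$ dominates $G\boxtimes H$. Given an arbitrary vertex $(v,u)$, domination in $G$ yields some $s_1\in S_1\cap N[v]$ and domination in $H$ yields some $s_2\in S_2\cap N[u]$. A short case check—according to whether each coordinate is an equality or a genuine adjacency—shows, using the three adjacency rules defining $\boxtimes$, that $(s_1,s_2)$ either equals or is adjacent to $(v,u)$. Hence $(v,u)\in N[S_1\times S_2]$, so $S_1\times S_2$ is a dominating set and $\gamma(G\boxtimes H)\le |S_1||S_2|=\gamma(G)\gamma(H)$.

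The heart of the argument is the lower bound. First I would record the structural identity $N_{G\boxtimes H}[(v,u)]=N_G[v]\times N_H[u]$, which follows at once from the three defining adjacency conditions together with the equality case. Next, fix a maximum $2$-packing $P=\{w_1,\dots,w_{P_2(G)}\}$ of $G$; by definition the distance between distinct elements exceeds two, so the closed neighborhoods $N_G[w_1],\dots,N_G[w_{P_2(G)}]$ are pairwise disjoint. Let $D$ be a $\gamma(G\boxtimes H)$-set and, for each $i$, put $D_i=\{(v',u')\in D:\ v'\in N_G[w_i]\}$. Since the sets $N_G[w_i]$ are disjoint, the $D_i$ are pairwise disjoint subsets of $D$.

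Then I would show that the projection $\pi_i(D_i)=\{u':\ (v',u')\in D_i\ \text{for some}\ v'\}$ dominates $H$: for any $u\in V_2$, the vertex $(w_i,u)$ must be dominated by some $(v',u')\in D$ lying in $N_{G\boxtimes H}[(w_i,u)]=N_G[w_i]\times N_H[u]$, hence $(v',u')\in D_i$ and $u'\in N_H[u]$, so $u\in N_H[u']$ with $u'\in\pi_i(D_i)$. Consequently $|D_i|\ge|\pi_i(D_i)|\ge\gamma(H)$ for every $i$, and summing over the disjoint $D_i$ gives $\gamma(G\boxtimes H)=|D|\ge\sum_{i}|D_i|\ge P_2(G)\gamma(H)$. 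Interchanging $G$ and $H$ yields $\gamma(G\boxtimes H)\ge\gamma(G)P_2(H)$, and taking the maximum finishes the proof. The one delicate point is the disjointness step: it is exactly the $2$-packing hypothesis that forces the $N_G[w_i]$, and therefore the $D_i$, to be disjoint, which is what lets the per-layer estimates $\gamma(H)$ be added rather than merely compared.
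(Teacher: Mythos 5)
Your proof is correct and complete: the upper bound via the product of dominating sets, the identity $N_{G\boxtimes H}[(v,u)]=N_G[v]\times N_H[u]$, and the lower bound via the pairwise-disjoint closed neighborhoods of a maximum $2$-packing are all handled properly. Note that the paper itself gives no proof of this theorem---it is quoted from the Imrich--Klav\v{z}ar book---and your argument is essentially the standard one found there, so there is nothing to reconcile between your approach and the paper's.
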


One immediate consequence of Theorem \ref{thetrivial} is the following result.
\begin{corollary}\label{igualdadDominationStrong}
For any graph $G\in \mathfrak{F}$ and any graph $H$, $\gamma(G\boxtimes H)=\gamma(G)\gamma(H).$
\end{corollary}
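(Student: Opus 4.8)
The plan is to obtain the result by squeezing $\gamma(G\boxtimes H)$ between the upper and lower bounds of Theorem \ref{thetrivial}, which coincide precisely when $G\in\mathfrak{F}$. The upper bound $\gamma(G\boxtimes H)\le \gamma(G)\gamma(H)$ is handed to us directly by Theorem \ref{thetrivial} and costs nothing further. So the entire task reduces to producing a matching lower bound.

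For that lower bound I would exploit the identity $P_2(G)=\gamma(G)$, valid for every $G\in\mathfrak{F}$, which was already noted in the excerpt. The reason is that any efficient dominating set $S=\{u_1,\dots,u_{\gamma(G)}\}$ has, by definition, pairwise disjoint closed neighborhoods $N[u_i]\cap N[u_j]=\emptyset$; this forces its members to lie at pairwise distance greater than two, so $S$ is itself a $2$-packing and hence $\gamma(G)\le P_2(G)$. Coupling this with the general inequality $P_2(G)\le\gamma(G)$ pins down $P_2(G)=\gamma(G)$.

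With this equality available, the left-hand estimate of Theorem \ref{thetrivial} gives
$$\gamma(G\boxtimes H)\ge P_2(G)\,\gamma(H)=\gamma(G)\gamma(H).$$
Combining this with the upper bound yields $\gamma(G\boxtimes H)=\gamma(G)\gamma(H)$, completing the argument.

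I do not anticipate any genuine obstacle: the substantive content is entirely carried by Theorem \ref{thetrivial} together with the structural observation that an efficient dominating set is a $2$-packing. The only step meriting a moment of care is the verification of $P_2(G)\ge\gamma(G)$ for $G\in\mathfrak{F}$, and even that is immediate from the disjointness of the closed neighborhoods defining an efficient dominating set.
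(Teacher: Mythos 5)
Your proposal is correct and is exactly the argument the paper intends: the corollary is stated as an immediate consequence of Theorem \ref{thetrivial}, using the fact (established earlier in the paper) that $P_2(G)=\gamma(G)$ for $G\in\mathfrak{F}$ since an efficient dominating set is a $2$-packing of size $\gamma(G)$. Nothing is missing; you have simply written out the squeeze between the matching lower and upper bounds that the paper leaves implicit.
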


The next result follows from Lemma \ref{lema-roman-dom} and Theorem \ref{thetrivial}.
\begin{corollary} \label{coroloco} For any graphs $G$ and $H$,
$$\max\{P_2(G)\gamma(H),\gamma(G)P_2(H)\}\le \gamma_R(G\boxtimes H)\le 2\gamma(G)\gamma(H).$$
\end{corollary}


\begin{theorem}\label{roman-sup-completo-k}
Let  $f_1=(A_0,A_1,A_2)$ be a $\gamma_R(G)$-function and let
$f_2=(B_0,B_1,B_2)$ be a $\gamma_R(H)$-function. Then, $$\gamma_R(G\boxtimes H)\le
\gamma_R(G)\gamma_R(H)-2|A_2||B_2|.$$
\end{theorem}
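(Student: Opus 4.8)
The plan is to construct an explicit Roman dominating function on $G \boxtimes H$ from the two given functions $f_1$ and $f_2$, in the spirit of the upper-bound proofs in Theorem~\ref{superior} and Theorem~\ref{flojito}. The natural starting point is the product function $g(u,v) = f_1(u) \cdot f_2(v)$, but this takes values in $\{0,1,2,4\}$, so it must be modified. The key structural observation is that in the strong product, a vertex $(u,v)$ with $u \in A_2$ and $v \in B_2$ is adjacent to every vertex $(u',v')$ with $u' \in N[u]$ and $v' \in N[v]$; in particular a single vertex assigned value $2$ in the $A_2 \times B_2$ block can dominate a whole neighborhood-cross-neighborhood. This is exactly the extra domination power of the strong product (its edge set strictly contains that of the Cartesian product), and it is what lets us save on the $A_2 \times B_2$ block relative to the trivial product.

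First I would define a function $f$ on $V_1 \times V_2$ by assigning value $2$ to vertices in $A_2 \times B_2$, value $2$ to vertices in $(A_2 \times B_1) \cup (A_1 \times B_2)$ where a single coordinate already carries full weight, and otherwise letting $f(u,v)$ be determined by $f_1(u)$ and $f_2(v)$ so that the weight matches $\gamma_R(G)\gamma_R(H)$ on the complement of the savings region. The cleanest bookkeeping is to start from the weight $\gamma_R(G)\gamma_R(H) = (2|A_2|+|A_1|)(2|B_2|+|B_2|)$ attained by some baseline assignment, and then argue that on the block $A_2 \times B_2$ we can drop the total weight by $2|A_2||B_2|$ while preserving the Roman condition. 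Concretely, on $A_2 \times B_2$ (which has $|A_2||B_2|$ vertices) the baseline would pay $4$ per vertex; I would instead pay only $2$ per vertex by keeping each such vertex at value $2$, saving $2$ per vertex, for a total saving of $2|A_2||B_2|$.

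The heart of the argument is to verify the Roman condition for every vertex $(u,v)$ assigned $0$ under $f$. I would split into cases according to where $u$ sits ($A_0$, $A_1$, or $A_2$) and where $v$ sits. If $f(u,v)=0$ forces $f_1(u)=0$ or $f_2(v)=0$, then using that $f_1$ (resp.\ $f_2$) is Roman there is a neighbor $u'$ of $u$ with $f_1(u')=2$ (resp.\ $v'$ with $f_2(v')=2$); the corresponding product vertex then carries value $2$ in $f$ and is adjacent to $(u,v)$ in $G \boxtimes H$, exactly because one coordinate is equal and the other is adjacent (so the Cartesian-type adjacency already suffices). The only delicate vertices are those in the modified block and its border, where I must check that reducing weight on $A_2 \times B_2$ did not orphan any $0$-vertex; here the strong-product diagonal adjacency ($u \sim u'$ and $v \sim v'$ simultaneously) provides the needed dominator.

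The main obstacle I anticipate is getting the defining cases of $f$ to simultaneously (a) leave every $0$-vertex dominated and (b) hit the exact weight $\gamma_R(G)\gamma_R(H) - 2|A_2||B_2|$, rather than something merely of that order. The risk is double-counting or overlooking a vertex in $(A_0 \cup A_1)\times(B_0 \cup B_1)$ whose only potential dominator lay in the block whose weight I reduced. To control this I would write the weight as a sum over the nine blocks $A_i \times B_j$, record the per-block contribution, and confirm that the total telescopes to the claimed bound; the verification of the Roman property then reduces to checking, block by block, that each $0$-vertex has a value-$2$ neighbor, with the strong-product edges doing the work precisely on the $A_2 \times B_2$ reduction.
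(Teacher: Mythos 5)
Your construction---the product $f_1(u)f_2(v)$ capped at $2$ on $A_2\times B_2$, so that $A_2\times B_2$, $A_1\times B_2$ and $A_2\times B_1$ receive value $2$, $A_1\times B_1$ receives $1$, and everything else $0$, with the block-by-block weight count telescoping to $\gamma_R(G)\gamma_R(H)-2|A_2||B_2|$---is exactly the function used in the paper, and your verification (Cartesian-type edges for blocks with one zero coordinate, diagonal strong-product edges where both coordinates are zero) is the paper's case analysis. This is essentially the same proof; the only imprecision is your claim that whenever $f_1(u)=0$ or $f_2(v)=0$ a Cartesian-type edge already supplies a value-$2$ neighbor, which fails precisely on $A_0\times B_0$ (there $(u',v)\in A_2\times B_0$ and $(u,v')\in A_0\times B_2$ carry value $0$), but that is exactly the case your diagonal-adjacency remark then correctly handles.
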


\begin{proof}
 We define the function  $f$ on
$G\boxtimes H$ as follows:
$$f(u,v)=\left\{\begin{array}{cl}
                  2, & (u,v)\in (A_1\times B_2)\cup(A_2\times B_1)\cup (A_2\times B_2), \\
                  1, & (u,v)\in A_1\times B_1, \\
                  0, & \mbox{otherwise.}
                \end{array}
\right.$$
Note that the set $(A_0\times B_0)\cup (A_0\times B_2)\cup  (A_2\times B_0)$ is dominated by $A_2\times B_2$, the set $A_1\times B_0$ is dominated by $A_1\times B_2$, and $A_0\times B_1$ is dominated by $A_2\times B_1$. Then we have that $f$ is a  Roman dominating function on $G\boxtimes H$.





Therefore,
\begin{align*}
\gamma_R(G\boxtimes H)&\le 2|A_2||B_2|+2|A_1||B_2|+2|A_2||B_1|+|A_1||B_1|\\
&= 4|A_2||B_2|+2|A_1||B_2|+2|A_2||B_1|+|A_1||B_1|-2|A_2||B_2| \\
&=2|A_2|(2|B_2|+|B_1|)+|A_1|(2|B_2|+|B_1|)-2|A_2||B_2|\\
&=(2|A_2|+|A_1|)(2|B_2|+|B_1|)-2|A_2||B_2|\\
&=\gamma_R(G)\gamma_R(H)-2|A_2||B_2|.
\end{align*}

\end{proof}

Now we present some interesting consequences of Theorem \ref{roman-sup-completo-k}.
\begin{corollary}\label{roman-inf-sup}
For any non-empty graphs $G$ and $H$, $\gamma_R(G\boxtimes H)\le
\gamma_R(G)\gamma_R(H)-2.$
\end{corollary}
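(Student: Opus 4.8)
The plan is to derive Corollary~\ref{roman-inf-sup} directly from Theorem~\ref{roman-sup-completo-k} by exhibiting, for any non-empty graphs $G$ and $H$, $\gamma_R(G)$- and $\gamma_R(H)$-functions whose respective $B_2$-type sets are both non-empty. The inequality $\gamma_R(G\boxtimes H)\le \gamma_R(G)\gamma_R(H)-2|A_2||B_2|$ will yield the claim as soon as I can guarantee $|A_2|\ge 1$ and $|B_2|\ge 1$, since then $2|A_2||B_2|\ge 2$.

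First I would observe that ``non-empty'' should mean each factor has at least one edge (equivalently, is not an edgeless graph), since the choice of a vertex with value $2$ in a $\gamma_R$-function requires an edge to justify the economy of assigning $0$ somewhere. Given an edge $xy$ in $G$, I would argue that there is a $\gamma_R(G)$-function $f_1=(A_0,A_1,A_2)$ with $A_2\ne\emptyset$: indeed, if some optimal Roman dominating function had $A_2=\emptyset$, then every vertex with value $0$ would have no neighbor of value $2$, forcing $A_0=\emptyset$ and hence $f_1\equiv 1$ on all of $V_1$, giving weight $n_1=|V_1|$. But placing a $2$ on one endpoint of the edge $xy$ and a $1$ on every other vertex except its neighbors (which may be set to $0$) produces a strictly smaller weight whenever $G$ has an edge, contradicting optimality. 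Thus any $\gamma_R(G)$-function of minimum weight—at least one—must use the value $2$, so $|A_2|\ge 1$. The identical reasoning applied to $H$ gives a $\gamma_R(H)$-function with $|B_2|\ge 1$.

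With such functions fixed, I would simply invoke Theorem~\ref{roman-sup-completo-k} with these particular $f_1$ and $f_2$: it asserts $\gamma_R(G\boxtimes H)\le \gamma_R(G)\gamma_R(H)-2|A_2||B_2|$, and since $|A_2|\ge 1$ and $|B_2|\ge 1$ we have $2|A_2||B_2|\ge 2$, whence $\gamma_R(G\boxtimes H)\le \gamma_R(G)\gamma_R(H)-2$, as required.

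The only real obstacle is the preliminary lemma that every (or at least one) optimal Roman dominating function on a graph with an edge assigns the value $2$ to some vertex. This is intuitively clear but deserves a careful one-line justification, because Theorem~\ref{roman-sup-completo-k} is stated for arbitrary $\gamma_R$-functions and does not itself promise $A_2\ne\emptyset$. I expect the cleanest route is the weight-comparison argument above: if $A_2=\emptyset$ then the function is constant $1$ (weight $n_1$), whereas assigning $2$ to one endpoint of an edge and $0$ to the other strictly lowers the weight, so an all-ones function is never optimal once an edge is present. This settles $|A_2|\ge 1$, and symmetrically $|B_2|\ge 1$, completing the proof.
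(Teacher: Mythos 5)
Your overall route is exactly the paper's: Corollary \ref{roman-inf-sup} is presented there as an immediate consequence of Theorem \ref{roman-sup-completo-k}, the implicit point being precisely the one you make explicit, namely that a non-empty graph admits a $\gamma_R$-function whose set of vertices with value $2$ is non-empty, so that $2|A_2||B_2|\ge 2$. Making that point rigorous is the right instinct, since Theorem \ref{roman-sup-completo-k} is stated for an arbitrary choice of $\gamma_R$-functions and by itself does not rule out $|A_2|=0$.

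However, your justification of that auxiliary claim contains an error. You argue that if some optimal Roman dominating function has $A_2=\emptyset$, then it is the all-ones function of weight $n_1$, and that placing a $2$ on an endpoint $x$ of an edge, $0$ on $N(x)$, and $1$ elsewhere gives weight strictly below $n_1$; you conclude that \emph{every} minimum-weight Roman dominating function must use the value $2$. The modified function has weight $n_1+1-\deg(x)$, which is strictly smaller than $n_1$ only when $\deg(x)\ge 2$. For $G=K_2$ (or any disjoint union of $K_2$'s and isolated vertices, which is a non-empty graph) the all-ones function \emph{is} optimal, so both the claimed contradiction and the conclusion that every $\gamma_R(G)$-function uses the value $2$ are false. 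Fortunately, the corollary only needs the existence form, and your own construction delivers it: since $\deg(x)\ge 1$, the modified function has weight $n_1+1-\deg(x)\le n_1=\gamma_R(G)$, hence it is itself a $\gamma_R(G)$-function, and it has $A_2=\{x\}\ne\emptyset$. Replacing the contradiction argument by this observation (and arguing symmetrically for $H$) repairs the proof: feeding these particular functions into Theorem \ref{roman-sup-completo-k} gives $\gamma_R(G\boxtimes H)\le\gamma_R(G)\gamma_R(H)-2|A_2||B_2|\le\gamma_R(G)\gamma_R(H)-2$, as required.
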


The above inequality is achieved,  for instance, if $G$ and $H$ are graphs of order $n_1$ and $n_2$, containing a vertex of degree $n_1-1$ and $n_2-1$, respectively.
In such a case, we have $\gamma_R(G\boxtimes H)\le \gamma_R(G)\gamma_R(H)-2=2\cdot 2-2=2.$

In order to show one example where Corollary \ref{roman-inf-sup} leads to better result than Corollary \ref{coroloco} we take a graph $G$ such that $\gamma_R(G)=\gamma(G)+1>3$ (see Proposition \ref{gamma+1}). In this case Corollary \ref{roman-inf-sup} leads to $\gamma_R(G\boxtimes G)\le (\gamma(G))^2+2\gamma(G)$, while Corollary \ref{coroloco} leads to $\gamma_R(G\boxtimes G)\le 2(\gamma(G))^2$.





If $H=P_n$ or $H=C_n$, then we have that for any  $\gamma_R(H)$-function  $f=(B_0,B_1,B_2)$, $|B_2|=\left\lfloor \frac{n}{3}\right\rfloor$. Hence, Theorem \ref{roman-sup-completo-k} leads to the following result.

\begin{corollary}
Let $G$ be a  non-empty graph. If $H=P_{n}$ or $H=C_{n}$, then
$$\gamma_R(G\boxtimes H)\le \left\{\begin{array}{ccc}
                                           \frac{2n+1}{3} \gamma_R(G)- 2\left\lfloor \frac{n}{3} \right\rfloor, & & n \equiv 1(3) \\
                                            &  &  \\
                                            2\left\lceil\frac{n}{3}\right\rceil \gamma_R(G)- 2\left\lfloor \frac{n}{3} \right\rfloor, & & n \not\equiv 1(3).
                                        \end{array} \right.
$$
\end{corollary}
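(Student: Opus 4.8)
The plan is to specialize Theorem~\ref{roman-sup-completo-k} to $H=P_n$ and $H=C_n$, exploiting the structural facts recalled immediately before the statement. Recall that Theorem~\ref{roman-sup-completo-k} asserts, for \emph{any} $\gamma_R(G)$-function $f_1=(A_0,A_1,A_2)$ and \emph{any} $\gamma_R(H)$-function $f_2=(B_0,B_1,B_2)$, the bound $\gamma_R(G\boxtimes H)\le \gamma_R(G)\gamma_R(H)-2|A_2||B_2|$. Since the inequality is valid for every admissible choice of the two functions, the strategy is to pick them so that the correction term $2|A_2||B_2|$ is at least $2\lfloor n/3\rfloor$, and then to substitute the known value of $\gamma_R(H)$.

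First I would fix a $\gamma_R(H)$-function $f_2=(B_0,B_1,B_2)$ with $|B_2|=\lfloor n/3\rfloor$, which exists for both $H=P_n$ and $H=C_n$ as recalled above. Next I would select a $\gamma_R(G)$-function $f_1=(A_0,A_1,A_2)$ with $|A_2|\ge 1$; this is the only place where the hypothesis that $G$ be non-empty is used. Feeding these into Theorem~\ref{roman-sup-completo-k} then gives
$$\gamma_R(G\boxtimes H)\le \gamma_R(G)\gamma_R(H)-2|A_2|\left\lfloor\frac{n}{3}\right\rfloor\le \gamma_R(G)\gamma_R(H)-2\left\lfloor\frac{n}{3}\right\rfloor.$$
Finally I would substitute $\gamma_R(P_n)=\gamma_R(C_n)=\frac{2n+1}{3}$ when $n\equiv 1\,(3)$ and $\gamma_R(P_n)=\gamma_R(C_n)=2\lceil n/3\rceil$ when $n\not\equiv 1\,(3)$; replacing $\gamma_R(H)$ by these values reproduces exactly the two branches of the displayed formula.

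The step I expect to be the main (albeit minor) obstacle is justifying the existence of a $\gamma_R(G)$-function with $|A_2|\ge 1$ for non-empty $G$. If some $\gamma_R(G)$-function assigns value $0$ to a vertex, then that vertex has a neighbour of value $2$, so $|A_2|\ge 1$ automatically. Otherwise every optimal function uses only the values $1$ and $2$; if moreover its value-$2$ set were empty, then all vertices would carry weight $1$, whence $\gamma_R(G)=n$, and choosing an edge $uv$ of $G$ and re-setting $f(u)=2$, $f(v)=0$ would yield a Roman dominating function of the same weight $n$ with $|A_2|=1$. Thus such an $f_1$ always exists when $G$ has at least one edge. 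I would also note that, because Theorem~\ref{roman-sup-completo-k} holds for every choice of $f_1$ and $f_2$, one could use functions with larger $|A_2|$ or $|B_2|$ to sharpen the constant, but $|A_2|\ge 1$ together with $|B_2|=\lfloor n/3\rfloor$ already suffices for the inequality as stated.
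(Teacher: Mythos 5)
Your proposal is correct and takes essentially the same route as the paper: the corollary is obtained by specializing Theorem~\ref{roman-sup-completo-k} to a $\gamma_R(H)$-function with $|B_2|=\left\lfloor n/3\right\rfloor$ (which the paper notes exists for $P_n$ and $C_n$) and then substituting the known values of $\gamma_R(P_n)$ and $\gamma_R(C_n)$. Your explicit argument that any non-empty graph $G$ admits a $\gamma_R(G)$-function with $|A_2|\ge 1$ is a detail the paper leaves implicit (it is tacitly used there, as in Corollary~\ref{roman-inf-sup}), and you handle it correctly.
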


 Every star graph $G=K_{1,r}$ satisfies the above equality for $n\not\equiv 2(3)$. In such a case we have $\gamma_R\left(C_{n}\boxtimes K_{1,r}\right)=\gamma_R\left(P_{n}\boxtimes K_{1,r}\right)=2\left\lceil \frac{n}{3} \right\rceil$. Note that   $C_{n}\boxtimes K_{1,r}$ and $P_{n}\boxtimes K_{1,r}$ are Roman graphs for $n\not\equiv 2(3)$.



\begin{theorem} \label{superiorstr}
Let $G$ and $H$ be two graphs. If $G\in \mathfrak{F}$, then $\gamma_R(G\boxtimes H)\ge \gamma(G)\gamma_R(H)$
\end{theorem}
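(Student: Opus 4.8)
The plan is to adapt the argument of Theorem \ref{roman-cartesiano-inferior1} to the strong product, slicing $G\boxtimes H$ into copies of $H$ by means of the efficient dominating set of $G$ and projecting a minimum Roman dominating function onto each slice. Let $V_1$ and $V_2$ denote the vertex sets of $G$ and $H$, and let $S=\{u_1,\dots,u_{\gamma(G)}\}$ be an efficient dominating set of $G$, so that $\{N[u_1],\dots,N[u_{\gamma(G)}]\}$ partitions $V_1$ and hence the sets $\Pi'_i=N[u_i]\times V_2$ partition $V(G\boxtimes H)$. I would fix a $\gamma_R(G\boxtimes H)$-function $f=(B_0,B_1,B_2)$ and, for each $i$, define $f_i:V_2\to\{0,1,2\}$ by $f_i(v)=\max\{f(u,v):u\in N[u_i]\}$, together with $X_j^{(i)}=\{v\in V_2:f_i(v)=j\}$.

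The core step is to verify that each $f_i$ is a Roman dominating function on $H$. Suppose $v\in X_0^{(i)}$, that is, $f(u,v)=0$ for every $u\in N[u_i]$; in particular $(u_i,v)\in B_0$. Since $B_2$ dominates $B_0$, there is a vertex $(u',v')\in B_2$ adjacent to $(u_i,v)$ in $G\boxtimes H$, and by the definition of the strong product this forces $u'\in N[u_i]$ and $v'\in N[v]$. I would then rule out $v'=v$: in that case $u'\sim u_i$, so $(u',v)\in N[u_i]\times\{v\}\subseteq B_0$, contradicting $(u',v')\in B_2$. Hence $v'\sim v$, and since $u'\in N[u_i]$ we get $f_i(v')\ge f(u',v')=2$, i.e. $v'\in X_2^{(i)}$. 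Thus every $v\in X_0^{(i)}$ has a neighbor in $X_2^{(i)}$, so $f_i=(X_0^{(i)},X_1^{(i)},X_2^{(i)})$ is Roman and $\gamma_R(H)\le 2|X_2^{(i)}|+|X_1^{(i)}|$.

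To finish, I would bound the weight of $f_i$ by the weight of $f$ on the slice $\Pi'_i$. Each $v\in X_2^{(i)}$ witnesses some $u\in N[u_i]$ with $(u,v)\in B_2$, so $|X_2^{(i)}|\le|B_2\cap\Pi'_i|$, and likewise $|X_1^{(i)}|\le|B_1\cap\Pi'_i|$; consequently $2|X_2^{(i)}|+|X_1^{(i)}|\le 2|B_2\cap\Pi'_i|+|B_1\cap\Pi'_i|$. Summing over $i$ and using that the $\Pi'_i$ partition $V(G\boxtimes H)$ yields
\[
\gamma(G)\gamma_R(H)\le\sum_{i=1}^{\gamma(G)}\bigl(2|B_2\cap\Pi'_i|+|B_1\cap\Pi'_i|\bigr)=2|B_2|+|B_1|=\gamma_R(G\boxtimes H).
\]

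The main obstacle, and the only substantive difference from the Cartesian case, is the presence of the diagonal adjacencies of the strong product: a priori the $B_2$-neighbor of $(u_i,v)$ might sit in the same $H$-column $v$, which would break the projection. What rescues the argument is that every same-column neighbor $(u',v)$ of $(u_i,v)$ satisfies $u'\sim u_i$, hence $u'\in N[u_i]$, so it lies in the all-zero column $N[u_i]\times\{v\}\subseteq B_0$; this forces the dominating $B_2$-vertex into a genuinely adjacent column $v'\sim v$, which is exactly what makes $f_i$ Roman. The efficiency of $S$ enters separately, guaranteeing that the $\Pi'_i$ form a partition so that the slice weights sum to $\gamma_R(G\boxtimes H)$.
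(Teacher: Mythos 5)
Your proposal is correct and follows essentially the same route as the paper: the same partition $\Pi'_i=N[u_i]\times V_2$ induced by an efficient dominating set, the same max-projections $f_i$, the same key observation that the all-zero column $N[u_i]\times\{v\}$ forces the dominating $B_2$-vertex of $(u_i,v)$ into a genuinely adjacent $H$-column, and the same weight-summing conclusion. Your explicit exclusion of the case $v'=v$ is just a more detailed rendering of the contradiction step the paper phrases via $N_H[y]\cap B_2^{(i)}=\emptyset$.
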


\begin{proof}
Let $V_1$ and $V_2$ be the vertex sets of $G$ and $H$, respectively.
Let $S=\{u_1,u_2,...,u_{\gamma(G)}\}$ be an efficient dominating set for $G$, {\em i.e.}, $\{N_G[u_1],N_G[u_2],...,N_G[u_{\gamma(G)}]\}$ is a vertex
partition for $G$.
Let $\{\Pi_1,\Pi_2,...,\Pi_{\gamma(G)}\}$ be the vertex partition of
$G\boxtimes H$ defined as $\Pi_i=N_G[u_i]\times V_2$, for every
$i\in\{1,...,\gamma(G)\}$.

Now, let $f=(B_0,B_1,B_2)$ be a  $\gamma_R(G\boxtimes H)$-function and, for every $i\in \{1,...,\gamma(G)\}$, let the function $f^{(i)}:V_2\rightarrow \{0,1,2\}$ defined by $f^{(i)}(v)=\max\{f(u,v)\;:\;(u,v)\in \Pi_i\}$. Let $\{B_0^{(i)},B_1^{(i)},B_2^{(i)}\}$ such that $B_j^{(i)}=\{v\in V_2\;:\;f^{(i)}(v)=j\}$ with $j\in \{0,1,2\}$ and $i\in \{1,...,\gamma(G)\}$.

If there is a vertex $y$ of $H$ such that $f^{(i)}(y)=0$ and $N_{H}[y]\cap B_2^{(i)}=\emptyset$, then  $f(u_i,y)=0$ and $(u_i,y)$ is not adjacent to any
vertex $(a,b)$ of $G\boxtimes H$ with $f(a,b)=2$, a contradiction. Thus,
$f^{(i)}=(B_0^{(i)},B_1^{(i)},B_2^{(i)})$ is a Roman dominating function on $H$ for every $i\in \{1,...,\gamma(G)\}$. As a consequence,

\begin{align*}
\gamma_R(G\boxtimes H)&=2|B_2|+|B_1|\\
&=\sum_{i=1}^{\gamma(G)}(2|B_2\cap\Pi_i|+|B_1\cap\Pi_i|)\\
&\ge \sum_{i=1}^{\gamma(G)}(2|B_2^{(i)}|+|B_1^{(i)}|)\\
&\ge\sum_{i=1}^{\gamma(G)}\gamma_R(H)\\
&=\gamma(G)\gamma_R(H).
\end{align*}
Therefore, the proof is complete.
\end{proof}

Cockayne et al. \cite{roman} gave some classes of Roman graphs and they posed the following question: Can you find other classes of Roman graphs? The next result is an answer to this question.

\begin{theorem}\label{coroRomanStrProduct}
If $G\in \mathfrak{F}$ and $H$ is a Roman graph, then $G\boxtimes H$ is a Roman graph.
\end{theorem}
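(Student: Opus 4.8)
The plan is to sandwich $\gamma_R(G\boxtimes H)$ between matching upper and lower bounds, both equal to $2\gamma(G)\gamma(H)$, and then recognize this common value as $2\gamma(G\boxtimes H)$ by invoking the domination equality available for factors in $\mathfrak{F}$. Since being Roman means precisely $\gamma_R = 2\gamma$, this chain of equalities is exactly what we need.

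First I would record the hypothesis that $H$ is Roman as $\gamma_R(H)=2\gamma(H)$. For the lower bound, I would apply Theorem \ref{superiorstr}, which holds because $G\in\mathfrak{F}$, to obtain
\begin{equation*}
\gamma_R(G\boxtimes H)\ge \gamma(G)\gamma_R(H)=\gamma(G)\cdot 2\gamma(H)=2\gamma(G)\gamma(H).
\end{equation*}
For the upper bound I would invoke Corollary \ref{coroloco}, which gives $\gamma_R(G\boxtimes H)\le 2\gamma(G)\gamma(H)$ for any graphs. Combining the two inequalities forces the equality
\begin{equation*}
\gamma_R(G\boxtimes H)=2\gamma(G)\gamma(H).
\end{equation*}

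The final step is to rewrite the right-hand side in terms of $\gamma(G\boxtimes H)$. Here I would use Corollary \ref{igualdadDominationStrong}, which states that $\gamma(G\boxtimes H)=\gamma(G)\gamma(H)$ whenever $G\in\mathfrak{F}$. Substituting this yields $\gamma_R(G\boxtimes H)=2\gamma(G\boxtimes H)$, so $G\boxtimes H$ is a Roman graph by definition, completing the argument.

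I do not expect any genuine obstacle in this proof, since all the analytic work has already been carried out in the preceding results; the only thing to be careful about is that each of the three ingredients is applicable. Both Theorem \ref{superiorstr} and Corollary \ref{igualdadDominationStrong} require the hypothesis $G\in\mathfrak{F}$, which is given, and Corollary \ref{coroloco} is unconditional, so the three results fit together without any gap. The essential observation is simply that the Roman hypothesis on $H$ is exactly what makes the lower bound of Theorem \ref{superiorstr} meet the universal upper bound of Corollary \ref{coroloco}.
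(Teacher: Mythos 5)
Your proposal is correct and follows exactly the paper's own argument: Theorem \ref{superiorstr} with $\gamma_R(H)=2\gamma(H)$ gives the lower bound $2\gamma(G)\gamma(H)$, Corollary \ref{coroloco} gives the matching upper bound, and Corollary \ref{igualdadDominationStrong} identifies this common value as $2\gamma(G\boxtimes H)$. Your write-up merely makes explicit the steps the paper compresses into three sentences.
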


\begin{proof}
If  $G\in \mathfrak{F}$ and $H$ is  Roman, then Theorem \ref{superiorstr} leads to $\gamma_R(G\boxtimes H)\ge 2\gamma(G)\gamma(H)$. So, by Corollary \ref{coroloco} we obtain $\gamma_R(G\boxtimes H)=2\gamma(G)\gamma(H)$.  Hence, by Corollary \ref{igualdadDominationStrong} we conclude the proof.
\end{proof}

\end{document}